\documentclass[11pt]{article}
\usepackage[T1]{fontenc}
\usepackage{lmodern}
\usepackage{microtype}
\usepackage{xcolor}
\usepackage[utf8]{inputenc}
\usepackage[english]{babel}
\usepackage{amsmath,amsfonts,amssymb,amsthm,array}
\usepackage{graphicx}
\usepackage[colorinlistoftodos]{todonotes}
\usepackage{mathtools,tikz-cd}
\usepackage{cases}
\usepackage{tikz}
\usepackage{mathtools}
\usepackage{caption}
\usepackage{subcaption}
\usepackage{cellspace}
\usepackage[a4paper,
            left=3cm,
            right=3cm,
            top=3cm,
            bottom=3.5cm]{geometry}

\usepackage[
  colorlinks=true,
  linkcolor=blue,
  citecolor=blue,
  urlcolor=blue
]{hyperref}

\usepackage{hyperref}
\usepackage{breakurl} 
\usepackage{natbib}
\newcommand{\eps}{\varepsilon}

\renewcommand\phi{\varphi}

\numberwithin{equation}{section}

\newtheorem{theorem}{Theorem}[section]

\newtheorem{lemma}[theorem]{Lemma}

\theoremstyle{definition}

\newtheorem{remark}[theorem]{Remark}

\newtheorem{assm}[theorem]{Assumption}
\long\def\forget#1\forgotten{} 

\begin{document}
\title{Long-time asymptotics for multivariate Hawkes processes with long-range interactions} 
\date{}
\author{
Nadia Belmabrouk\footnote{
\begin{minipage}[t]{0.9\textwidth}
LAMA, UPEC \& CNRS, Université Paris-Est Créteil, 94010 Créteil, France.\\
Modal'X, Université Paris Nanterre, 92000 Nanterre, France.\\
\texttt{nadia.belmabrouk@parisnanterre.fr}
\end{minipage}
}
}

\maketitle

\begin{abstract}
We consider a system of interacting particles on an infinite graph, modeled by a multivariate Hawkes process with long-range interactions, where the interaction strength decays as a power law of the inter-particle distance with exponent $1+\alpha$.
This model is more intricate and realistic for some applications, such as neural networks, where long-range connections are present. Our main focus is to characterize the long-time behavior of the system depending on the range of the interactions. These results correspond to laws of large numbers. We prove that long-range interactions affect the limiting behavior in the subcritical case but not in the supercritical case.
The proofs of our results use properties of $\alpha$-stable laws, and Tauberian methods for Laplace transforms.
\end{abstract}


\textbf{Keywords}
 Multivariate Hawkes process, long-range dependence, $\alpha$-stable law, Tauberian Methods.

\maketitle

\section{Introduction}







The Hawkes process emerges as a powerful tool for modeling sequences of discrete events happening randomly and continuously in time, especially when each event's occurrence is influenced by all the previous events in the sequence. Introduced by A.G. Hawkes in 1971 \cite{hawkes1971spectra}, the univariate Hawkes process provides
a general framework for modeling time dependencies and self-excitation in point processes. The initial motivation came from modeling seismic activity \cite{hawkes1971spectra}, but the Hawkes process is now widely used in neuroscience (e.g., \cite{okatan2005analyzing}, \cite{reynaud2014goodness}), genomics \cite{reynaud2010adaptive}, order book modeling (e.g., \cite{lu2018high}, \cite{bauwens2009modelling}), and social interaction analysis \cite{zhou2013learning}.

In the context of interacting particle systems, this framework naturally extends to multivariate Hawkes processes, where each coordinate models the activity of a particle and interactions between particles are taken into account. For instance, in neuroscience, the activity of a neuron depends not only on its own past activity but also on the spiking activity of other neurons.
 In a multivariate Hawkes process, the dynamics of each particle $i$ of the system, is defined by a point process $(Z_t^i)_{t\geq 0}$ characterized by its stochastic intensity process $\big(\lambda_t^i \big)_{t \geq 0}$, satisfying for each $t \geq 0$:
\begin{equation}\label{intensity}
\lambda_t^i=\mathrm{P}(Z_t^i\ \text{has\ a\ jump\ in}\ [t,t+dt]\mid \mathcal{F}_t)= h_i\Big(\sum_{j\rightarrow i} \int_0^{t^-}\varphi_{ji}(t-u)dZ_u^j\Big),
\end{equation}
where $(\mathcal{F}_t)_{t\geq 0}$ is the natural history of the point processes, and the summation index~$j$ runs over all particles interacting with~$i$.

Here, $h_i$ is the {\em response function}, and $\varphi_{ji}$ is the {\em interaction function}  quantifying the effect of previous events from particle $j$ on the particle $i.$ Equivalently (see {\em e.g.} \cite{delattre2016hawkes}), the multivariate Hawkes process $Z=\big(Z_t^i\big)_{t\geq 0, i \in \mathbb{Z}}$ can be expressed as a solution to the system of stochastic differential equations driven for each $i$ by a Poisson measure $\pi^i$,
\begin{equation}\label{Hawkes}
Z_t^i=\int_0^t\int_0^{+\infty}\textbf{1}_{\{z\leq \lambda^i_s\}}\pi^i(dsdz),
\end{equation}
with the stochastic intensities $(\lambda^i_t)_{t\geq 0}$ defined by (\ref{intensity}).

Several models have been investigated; a typical example is the classical mean-field framework, in which particles are homogeneous and interact through a complete and finite graph, with identical interaction kernel mechanisms across all pairs. This framework has been extensively analyzed, addressing various questions related to large-population limits (when $N$ goes to infinity) \cite{erny2021conditional}, \cite{erny2022mean}, \cite{delattre2016hawkes}, and long-time behavior \cite{delattre2016hawkes}, \cite{bremaud1996stability}. More realistic frameworks have been proposed for various applications, including nearest-neighbor models (see \cite{delattre2016hawkes}) where the interaction graph on $\mathbb{Z}^d$ is not complete and each particle interacts only with its nearest neighbors. They study the long time behavior of the corresponding multivariate Hawkes process in the subcritical and supercritical cases.
Another class of models incorporates spatial dependencies, with interactions determined by the relative positions of the particles;
in \cite{agathe2022multivariate}, the author considers interactions that are governed by a position-dependent graph structure, while the strength of each interaction depends on the degree of the particle within that graph. The author establishes the large population limit of the system and investigates the long-time behavior of the limiting process in the subcritical regime. However, while long-range interactions are present in many physical and biological phenomena, these models feature a distance-independent interaction strength, and interactions between particles at arbitrarily large distances are not taken into account.

In this paper, we consider a different form of spatial dependency on the infinite complete graph over $\mathbb{Z}.$ Each $i\in\mathbb{Z}$ corresponds to the site of a particle, and the interaction between two particles~$i,j$ depends on their distance~$|i-j|$.
 More precisely,
 for any sites $i,j \in \mathbb{Z}$, in \eqref{intensity}, we consider:
\begin{itemize}
\item the {\em response function} as $h_i(x)=\mu_i +x$ where $\mu_i$ is the {\em "baseline intensity"} {associated with particle~$i$},
\item the {\em interaction function} $\varphi_{ji}$ as
\begin{equation}\label{eq:phialpha}
\varphi_{ji}(t) \propto \frac{\varphi(t)}{\vert i-j\vert^{\alpha+1}},
\end{equation}
for some parameter~$\alpha>0$ quantifying the range of the interaction.
\end{itemize}

The main result of the paper concerns the long-time behavior of the processes $(Z^i_t)_{t\geq 0}$.
 We investigate polynomial long-range interactions (see \eqref{eq:phialpha}) whose decay is tuned from short-range ($\mathbb{L}^2$ when $\alpha > 2$) and long-range (when $0<\alpha\leq 2$).
 The dynamics of the model depend on two parameters: the convolution kernel
$\varphi$
 which controls the temporal memory effect, and the exponent
$\alpha$
which controls the spatial decay of the interaction strength. In the subcritical case, the long-time behavior of the process~$Z^i$ depends on~$\varphi,$ $i$, and~$\alpha$. In the supercritical case, the behavior is the same as in the nearest-neighbor setting of \cite{delattre2016hawkes} and is independent of
$\alpha$. In particular, in the subcritical case, the range parameter~$\alpha$ influences the long-time behavior, but not in the supercritical case.

Note that, in the subcritical regime (i.e. when $\int_0^\infty \phi<1$), the proof is similar to that of \cite{delattre2016hawkes} and only requires adapting technical details.
In the supercritical regime (i.e. when $\int_0^\infty\phi>1$), in our setting, we cannot use their proofs relying on Feller's general renewal theorem (see \cite{feller197716}). Instead, we study the infinite-dimensional equation satisfied by the Laplace transforms of the processes $Z^i$ ($i\in\mathbb{Z}$), and use Tauberian theorems to deduce the long-time behavior of these processes. Our proofs also exploit properties of $\alpha$-stable laws.

This paper is structured as follows: in Section~\ref{sec2}, we introduce our model, which is an extension of the multivariate Hawkes process that incorporates long-range interactions between components, and  formulate the main results. Section~\ref{sec3} is dedicated to the proofs of the main results.
We investigate both subcritical and supercritical scenarios.
The technical details for these proofs and supplementary results are also given in the Appendix~\ref{appenda}.

\section{Main results}\label{sec2}
To state formally our model, let us introduce the following stochastic matrix indexed on $\mathbb{Z}$
\begin{equation}\label{eq:A}
A_\alpha(i,j) = \left\{\begin{array}{ll}\frac{c(\alpha)}{|i-j|^{1+\alpha}}&\textrm{if }i\neq j,\\0&\textrm{otherwise},\end{array}\right.
\end{equation}
where $c(\alpha)$ is a normalization constant.  The model that we consider in this paper is formally defined as the infinite-dimensional Hawkes process $(Z^i)_{i\in\mathbb{Z}}$ at \eqref{Hawkes} with intensities defined as
\begin{align*}
    \lambda^i_t =& \mu_i + \left(A_\alpha\cdot \left(\int_0^t \phi(t-s)dZ^j_s\right)_{j\in\mathbb{Z}}\right)^i = \mu_i + \sum_{j\neq i} \frac{c(\alpha)}{|i-j|^{1+\alpha}}\int_0^t \phi(t-s)dZ^j_s,
\end{align*}
where the kernel $\varphi$ is a non-negative measurable function, and $\mu_i$ ($i\in\mathbb{Z}$) are non-negative real numbers.
Whence, the processes $Z^i$ ($i\in\mathbb{Z}$) can be written as
\begin{equation}\label{Model}
Z_t^i = \int_0^t \int_0^{+\infty}
\mathbf{1}_{\left\{ z \;\le\; \mu_i + \sum_{j \neq i} A(i,j) \int_0^{s^-} \varphi(s-u) \, dZ_u^j \right\}}
\, \pi^i(ds,dz),
\end{equation}
where $\pi^i(dsdz)$ is a family of i.i.d. $(\mathcal{F}_t)_{t\geqslant 0}$-Poisson measures.

In all the paper, we work under the following assumptions.
\begin{assm}\label{hyp1}
    $ $
    \begin{itemize}
        \item the function~$\phi$ defined on $[0;+\infty)$ is measurable and non-negative,
        \item the sequence of non-negative numbers $(\mu_i)_{i\in\mathbb{Z}}$ is bounded.
    \end{itemize}
\end{assm}

Note that, under Assumption~\ref{hyp1}, the infinite-dimensional process $(Z^i)_i$ is well-posed by Theorem~6 of \cite{delattre2016hawkes} (see their Remark~5.(iii)). In addition, if we define
$$p_0=1~~\textrm{ and }~~p_i=1/|i|^{1+\alpha}~~(i\in\mathbb{Z}^*),$$
then, for all~$t\geq 0,$
$$\sum_{i\in\mathbb{Z}}p_i\mathbb{E}\left[Z^i_t\right]<\infty.$$

\subsection{Subcritical regime}
Let
\begin{equation}\label{I}
I=\int_0^{+\infty} \phi(t)\,dt.
\end{equation}
The subcritical regime is characterized by the condition $I\in(0;1).$ Since $A_\alpha$ is a stochastic matrix and $I<1$, we can define the following matrix
$$Q_\alpha^I(i,j)=\sum_{n\geq 0} I^n A_\alpha^n(i,j).$$

\begin{theorem}\label{thm:sub}
    Assume that $I<1$ and that Assumption~\ref{hyp1} holds true. Then, for any~$\alpha>0$,
    \begin{equation}
        \mathbb{E}\left|\frac{Z^ i_t}{t}-\sum_{j\in\mathbb{Z}}Q_\alpha^I(i,j)\mu_j\right|\underset{t\to\infty}{\longrightarrow}0.
    \end{equation}
\end{theorem}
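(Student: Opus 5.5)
We follow the strategy of \cite{delattre2016hawkes} for the subcritical case and split the error as
\[
\frac{Z^i_t}{t}-\sum_j Q^I_\alpha(i,j)\mu_j=\frac{Z^i_t-\mathbb{E}Z^i_t}{t}+\Big(\frac{\mathbb{E}Z^i_t}{t}-\sum_j Q^I_\alpha(i,j)\mu_j\Big).
\]
We then show that the deterministic term tends to $0$ and that the fluctuation term tends to $0$ in $\mathbb{L}^2$, hence in $\mathbb{L}^1$.

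\textbf{Step 1 (first moments).} Let $m_t^i=\mathbb{E}[Z^i_t]$. Taking expectations in \eqref{Model} and using Fubini (justified by the weighted summability $\sum_i p_i\mathbb{E}Z^i_t<\infty$) gives the linear renewal system
\[
m^i_t=\mu_i t+\sum_{j}A_\alpha(i,j)\int_0^t\phi(t-s)\,m^j_s\,ds .
\]
Iterating this equation, and noting that the iteration series converges because each step costs a factor at most $I<1$ while $A_\alpha$ is stochastic, we obtain
\[
m^i_t=\sum_{n\ge0}\sum_j A_\alpha^n(i,j)\mu_j\int_0^t\phi^{*n}(t-s)\,s\,ds ,
\]
with $\phi^{*0}=\delta_0$. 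Since $\int_0^t\phi^{*n}(t-s)s\,ds/t\le I^n$, and this ratio tends to $I^n$ as $t\to\infty$ by dominated convergence in $s$, dominated convergence in $(n,j)$ yields $m^i_t/t\to\sum_n I^n\sum_j A^n_\alpha(i,j)\mu_j=\sum_jQ^I_\alpha(i,j)\mu_j$. The dominating function is $I^n A_\alpha^n(i,j)\sup_k\mu_k$, which is summable because $\sum_j A^n_\alpha(i,j)=1$ and $(\mu_k)$ is bounded. This also gives the uniform bound $\sup_i m^i_t\le t\,\|\mu\|_\infty/(1-I)$.

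\textbf{Step 2 (fluctuations).} Let $M^i_t=Z^i_t-\int_0^t\lambda^i_s\,ds$. These are martingales, mutually orthogonal because the $\pi^i$ are independent, with $\mathbb{E}[(M^i_t)^2]=m^i_t$. Writing $U^i_t=Z^i_t-m^i_t$ and applying Fubini gives
\[
U^i_t=M^i_t+\sum_jA_\alpha(i,j)\int_0^t\phi(t-s)\,U^j_s\,ds .
\]
Let $u_t=\sup_i\mathbb{E}[\sup_{s\le t}|U^i_s|^2]^{1/2}$. By Doob's inequality, $\mathbb{E}[\sup_{s\le t}|M^i_s|^2]^{1/2}\le 2\sqrt{m^i_t}\le C\sqrt t$. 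Minkowski's inequality together with stochasticity of $A_\alpha$ then gives $u_t\le C\sqrt t+I\,u_t$, so $u_t\le C\sqrt t/(1-I)$. Finiteness of $u_t$ is needed to perform this absorption; to guarantee it a priori, one first runs the argument on a finite truncation of the system, or uses the moment bounds from the well-posedness theorem of \cite{delattre2016hawkes}. It follows that $\mathbb{E}|U^i_t|/t\le u_t/t\to0$.

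Combining Steps 1 and 2 proves the theorem. The main obstacle is the infinite-dimensional bookkeeping: we must justify the interchanges of sums and integrals, and ensure $u_t<\infty$ so that it can be absorbed. The uniform-in-$i$ bounds, which use only that $A_\alpha$ is stochastic and $\mu$ is bounded, are what replace the finite-dimensional arguments. Note that $\alpha$ enters only through $A_\alpha$, and no property of $A_\alpha$ beyond stochasticity is needed.
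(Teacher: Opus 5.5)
Your architecture is the same as the paper's, inherited from \cite{delattre2016hawkes}: the renewal series for $m^i_t=\mathbb{E}[Z^i_t]$ gives $m^i_t/t\to(Q^I_\alpha\mu)_i$, and a martingale bound gives $O(\sqrt t)$ for $U^i_t=Z^i_t-m^i_t$. But the point you set aside as ``infinite-dimensional bookkeeping'' is precisely the content of the paper's proof, and your treatment of it is circular. In Step~1, after $N$ iterations the remainder is $R^N_i(t)=\sum_jA^{N+1}_\alpha(i,j)\int_0^t\phi^{*(N+1)}(t-s)\,m^j_s\,ds\le I^{N+1}\sup_j m^j_t$. So ``a factor $I$ per step and $A_\alpha$ stochastic'' kills it only if you already know $\sup_j m^j_t<\infty$, and you then deduce that uniform bound from the series, i.e.\ from the conclusion. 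A priori you only have $\sum_ip_im^i_t<\infty$ with $p_i=|i|^{-(1+\alpha)}$ ($i\neq 0$), which allows $m^i_t$ to grow polynomially in $|i|$. To identify $m$ with the bounded series solution, you need a uniqueness theorem for the infinite linear Volterra system in a class known to contain $m$.

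That is exactly what Lemma~\ref{lem27} provides: uniqueness in the class $\sum_ip_i\sup_{s\le t}|x^i_s|<\infty$. Its only new ingredient is the weighted estimate $\sum_{i\ne j}p_i|i-j|^{-(1+\alpha)}\le C_\alpha p_j$, i.e.\ $\sum_ip_i|(A_\alpha x)_i|\le C_\alpha\sum_ip_i|x_i|$, combined with a Gronwall-type argument in $t$. The series solution is bounded, hence in the class, hence equal to $m$; this gives the formula for $(m^i)'_t$ and $\sup_im^i_t\le Ct$. So your closing remark that nothing beyond stochasticity of $A_\alpha$ is used is where the proposal fails: the compatibility of $A_\alpha$ with the weights $p_i$ is exactly where $\alpha$ enters.

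Step~2 has the same defect in a stronger form. Absorbing $I\,u_t$ needs $\sup_i\mathbb{E}[(Z^i_t)^2]<\infty$, which the well-posedness theorem (weighted first moments only) does not give, and your truncation sketch still needs the weighted uniqueness to identify the limit with $Z$. The route of the paper and \cite{delattre2016hawkes} avoids second moments altogether. Expand $U^i_t=M^i_t+\sum_{n\ge1}\int_0^t\phi^{*n}(t-s)(A^n_\alpha M_s)_i\,ds$, which is justifiable by the same weighted uniqueness since $\mathbb{E}\sup_{s\le t}|U^i_s|\le 2m^i_t$. Then use orthogonality of the $M^j$, $\mathbb{E}[(A^n_\alpha M_s)_i^2]=\sum_jA^n_\alpha(i,j)^2m^j_s\le Cs$, to get $\mathbb{E}|U^i_t|\le C\sqrt t/(1-I)$ from the first-moment bound alone.
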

We follow the same approach, adapting Lemma~27  of \cite{delattre2016hawkes} to our long-range interaction framework as proved in Appendix~\ref{appenda}(see Lemma~\ref{lem27}). We prove that the rate of convergence is the same for both short-range and long-range interactions, namely of order $t$, while the limiting value depends on the site
$i$ of the particle and on the parameter $\alpha$ that describes the interaction range.

\subsection{Supercritical regime}

In this section, we investigate the long-time behavior of our model in the supercritical regime (i.e. $I>1$). We develop the techniques used by Feller \cite{feller197716}. As claimed above, we study the Laplace transforms of the processes~$Z^i$ ($i\in\mathbb{Z}$).

Let us denote
$$\mathcal{L}_\phi(p) = \int_0^{+\infty} \phi(t) e^{-p\,t}dt.$$
\begin{theorem}\label{thm:sup2}
Under Assumption~\ref{hyp1}, suppose that $I>1$ and that there exist constants
$C>0$ and $\kappa>0$ such that
\begin{equation}\label{eq:exp}
\varphi(t) \le C e^{\kappa t};~~~~~~\textrm{for all }t\geq 0.
\end{equation}

Let $\theta>0$ be the unique solution to $\mathcal{L}_{\varphi}(\theta) = 1$,
and assume that~\eqref{eq:exp} holds true for some~$\kappa<\theta$ and $C>0$.\\
Define the spatial average
\[
\bar{\mu}
= \lim_{r \to \infty}
\frac{1}{\#\{ i \in \mathbb{Z} : |i| \le r \}}
\sum_{|i| \le r} \mu_i.
\]
Then, for each fixed $i \in \mathbb{Z}$,
\[
\mathbb{E}\left|
\frac{Z_t^i}{e^{\theta t}}
-
\frac{\bar{\mu}}{\theta^2
\int_0^{\infty} t \varphi(t) e^{-\theta t}\, dt}
\right|
\longrightarrow 0
\quad \text{as } t \to \infty.
\]
\end{theorem}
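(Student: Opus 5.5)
We reduce the problem to the mean $m^i_t=\mathbb{E}[Z^i_t]$ and a second-moment estimate, as in the nearest-neighbour case of \cite{delattre2016hawkes}. Taking expectations in \eqref{Model} gives the infinite linear Volterra system
\begin{equation*}
m^i_t=\mu_i t+\sum_{j\neq i}A_\alpha(i,j)\int_0^t\phi(t-s)\,m^j_s\,ds ,
\end{equation*}
whose solution is $m^i_t=\sum_{n\ge0}\sum_j A_\alpha^n(i,j)\mu_j\int_0^t(s)\,\phi^{*n}(t-s)\,ds$ (with $\phi^{*0}=\delta_0$). In vector form this is $m_t=\sum_n A_\alpha^n\mu\,(\mathrm{Id}*\phi^{*n})(t)$. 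Taking Laplace transforms for $p>\theta$ gives
$$\hat m^i(p)=\frac1{p^2}\sum_{n\ge0}\mathcal{L}_\phi(p)^n (A_\alpha^n\mu)_i .$$
This series converges because $\mathcal{L}_\phi(p)<1$ and $A_\alpha$ is stochastic.

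The key spatial input is that $A_\alpha^n(i,\cdot)$ is the law of a random walk started at $i$ whose steps have tails of order $|k|^{-1-\alpha}$, so the steps lie in the domain of attraction of an $\alpha$-stable law. We then use a local limit theorem: the walk's spread is of order $n^{1/\alpha}$ (or $n^{1/2}$ when $\alpha>2$, possibly with logarithmic corrections at $\alpha\in\{1,2\}$), and $\sup_j A^n_\alpha(i,j)\to0$. Combined with the existence of the Cesàro average $\bar\mu$ and the boundedness of $\mu$, this should give $(A_\alpha^n\mu)_i\to\bar\mu$ as $n\to\infty$ for each fixed $i$. The argument is to write $A^n_\alpha(i,j)$ through the stable density $g$, namely $\approx a_n^{-1}g((j-i)/a_n)$ with $a_n\to\infty$, then sum by parts against the partial sums of $\mu$ and use that $g$ is unimodal and of bounded variation. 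We expect this to be the main obstacle, since ergodic averaging of a merely Cesàro-convergent deterministic sequence against stable kernels requires uniform local-limit control. If the direct route fails, we would first try working with Fourier transforms of $A_\alpha$: its symbol $1-c|\xi|^\alpha+o(|\xi|^\alpha)$ gives the smoothing directly.

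With $(A_\alpha^n\mu)_i=\bar\mu+\varepsilon_n$ and $\varepsilon_n\to0$, we obtain
$$p^2\hat m^i(p)=\frac{\bar\mu}{1-\mathcal{L}_\phi(p)}+\sum_n\varepsilon_n\mathcal{L}_\phi(p)^n .$$
As $p\downarrow\theta$, $1-\mathcal{L}_\phi(p)\sim(p-\theta)\int_0^\infty t\phi(t)e^{-\theta t}dt$. The hypothesis \eqref{eq:exp} with $\kappa<\theta$ guarantees that $\mathcal{L}_\phi$ is analytic near $\theta$ and that the moment is finite. The error term equals $o\big(1/(1-\mathcal{L}_\phi(p))\big)$, by the standard Abelian argument for null sequences. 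Hence
$$\hat m^i(p)\sim\frac{\bar\mu}{\theta^2\int_0^\infty t\phi(t)e^{-\theta t}dt}\cdot\frac1{p-\theta}.$$
Applying a Tauberian theorem to $e^{-\theta t}m^i_t$, which is justified because $m^i$ is nondecreasing, together with a monotone-density argument, yields $e^{-\theta t}m^i_t\to\bar\mu/(\theta^2\int t\phi e^{-\theta t})$. Here the convergence of the averaged Laplace transform to the exponential rate is upgraded to pointwise convergence via the renewal-type equation satisfied by $e^{-\theta t}m_t$.

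It remains to show that $\mathbb{E}|Z^i_t-m^i_t|=o(e^{\theta t})$. Writing $M^j_t=Z^j_t-\int_0^t\lambda^j_s\,ds$, we have $Z-m=\sum_n A^n_\alpha(\phi^{*n}*M)$ (a resolvent representation). Since the $M^j$ are orthogonal martingales with $\mathbb{E}[(M^j_t)^2]=\mathbb{E}\int_0^t\lambda^j_s\,ds=O(e^{\theta t})$ uniformly in $j$ (this uniformity follows from the mean estimate and boundedness of $\mu$), we get
$$\mathbb{E}(Z^i_t-m^i_t)^2\le C\sum_j\Big(\sum_n A^n_\alpha(i,j)\,\|\phi^{*n}\|\Big)^2 e^{\theta t}.$$
The resolvent in $n$ contributes a factor $e^{\theta t}$ in each sum, so we expect a bound of order $e^{2\theta t}\sum_j\rho(i,j)^2$, where $\rho(i,j)$ is a weighted Green's kernel. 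The decisive smallness comes again from $\sup_j A^n_\alpha(i,j)\to0$, giving $\sum_j(A^n_\alpha(i,j))^2\to0$. This makes the variance $o(e^{2\theta t})$, and Cauchy–Schwarz concludes. This $L^2$ step, the analogue of Lemma~\ref{lem27}, is the second delicate point, but it reuses the same local-limit estimate.
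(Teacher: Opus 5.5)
Your architecture matches the paper's. The mean comes from $\hat m^i(p)=p^{-2}\sum_n\mathcal{L}_\phi(p)^n(A_\alpha^n\mu)_i$ together with $(A_\alpha^n\mu)_i\to\bar\mu$, and the fluctuation $Z-m=M+\sum_{n\ge1}\phi^{*n}*(A_\alpha^nM)$ is controlled by $\xi_n=\sup_i\sum_j(A_\alpha^n(i,j))^2\to0$. Your summation-by-parts route to $(A^n_\alpha\mu)_i\to\bar\mu$ is fine, provided you compare $A^n_\alpha(i,\cdot)$ with the stable profile in $\ell^1$ (Theorem~\ref{thm:tcl}), not only in sup-norm. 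However, two steps fail as you justify them.

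\emph{The Tauberian step.} Monotonicity of $m^i$ plus $\hat m^i(p)\sim c/(p-\theta)$ only gives $T^{-1}\int_0^T e^{-\theta t}m^i_t\,dt\to c$, not $e^{-\theta t}m^i_t\to c$. For example, $m_t=e^{\theta\lfloor t\rfloor}$ is nondecreasing with Laplace transform $\frac{1-e^{-p}}{p(1-e^{\theta-p})}\sim\frac{1-e^{-\theta}}{\theta(p-\theta)}$, yet $e^{-\theta t}m_t=e^{-\theta(t-\lfloor t\rfloor)}$ oscillates. A monotone-density argument would need $e^{-\theta t}m^i_t$ itself to be monotone. The upgrade must therefore use the renewal structure explicitly. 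By Lemma~\ref{lem27},
$$e^{-\theta t}x^i_t=\sum_{n\ge0}(A^n_\alpha\mu)_i\int_0^t\bar\phi^{*n}(s)e^{-\theta(t-s)}ds,\qquad \bar\phi^{*0}=\delta_0,$$
where $\bar\phi=\phi e^{-\theta\cdot}$ is a probability density with finite mean $\bar m$. The key renewal theorem sends the sum over $n$ of these kernels to $1/(\theta\bar m)$. Each fixed-$n$ term tends to $0$, so the null sequence $(A^n_\alpha\mu)_i-\bar\mu$ contributes nothing, and integrating $x^i$ gives $m^i$. The paper likewise works with $x^i$ rather than $m^i$ here, via a Haar-type Tauberian theorem.

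\emph{The fluctuation estimate (the more substantive gap).} A bound $e^{2\theta t}\sum_j\rho(i,j)^2$ with a time-independent kernel $\rho$ can only give $O(e^{2\theta t})$. Moreover, the plain Green kernel $\sum_nA^n_\alpha(i,j)$ is infinite for $\alpha\ge1$, where the walk is recurrent. Your displayed inequality also forgets when the noise is injected. If $\|\phi^{*n}\|=\int_0^t\phi^{*n}$, the resolvent enters squared, at order $e^{2\theta t}$. Multiplying by the final-time variance $e^{\theta t}$ then leaves an extra factor $e^{\theta t}$ that $\xi_n\to0$ cannot absorb. The right bookkeeping, as in the paper, is Minkowski in $n$ with $\|(A^n_\alpha M_s)_i\|_{L^2}\le C\xi_n^{1/2}e^{\theta s/2}$ taken at the convolution time $s$. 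This reduces everything to
\[
e^{-\theta t}\sum_{n\ge1}\xi_n^{1/2}\int_0^t\phi^{*n}(t-s)e^{\theta s/2}ds\underset{t\to\infty}{\longrightarrow}0,
\]
which is Lemma~\ref{lem26d}. Its mechanism is what your sketch lacks:
\begin{itemize}
\item the total $\sum_n\int_0^t\phi^{*n}(t-s)e^{\theta s/2}ds$ is $O(e^{\theta t})$;
\item each fixed-$n$ term is $o(e^{\theta t})$, by dominated convergence from $\int_0^\infty\phi^{*n}(r)e^{-\theta r}dr=1$, or from $\kappa<\theta$ as in the paper.
\end{itemize}
So the normalized resolvent mass escapes to $n\to\infty$, where $\xi_n$ is small. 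The fact $\xi_n\to0$ alone does not yield the $o(e^{2\theta t})$ you assert.
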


Note that, any function of the form,
$$\phi : t\in[0,+\infty)\longmapsto C\,e^{\lambda t}$$
satisfies the hypotheses of Theorem~\ref{thm:sup2}. As well as any function satisfying
$$\forall \lambda>0,~\phi(t)e^{-\lambda t}\underset{t\to\infty}{\longrightarrow}0.$$

\begin{remark}
In the supercritical case, unlike the subcritical regime, there is no distinction between long-range and short-range interactions. The long-time behavior of $Z^i$ is the same for all ranges independently of $\alpha$.
\end{remark}

\section{Proof of supercritical: Theorem~\ref{thm:sup2}}\label{sec3}

First, we require two technical lemmas. Lemma~\ref{lem27} is standard and its proof is deferred to the Appendix~\ref{appenda}. Lemma~\ref{lem16} below extends Lemma~16 of \cite{delattre2016hawkes} from the nearest-neighbor case to the long-range setting. Its proof exploits asymptotic properties of $\alpha$-stable laws in place of the central limit theorem.

\begin{lemma}\label{lem16}$ $
    \begin{itemize}
    \item[$(i)$] For any~$\alpha>0$,
   
    $$\underset{i\in\mathbb{Z}}{\sup}\sum_{j\in\mathbb{Z}} \left(A_\alpha^n(i,j)\right)^2\underset{n\to\infty}{\longrightarrow}0.$$
    \item[$(ii)$] For any $\alpha\in(0;2)$,
    $$\left(A_\alpha^n\mu\right)_i\underset{n\to\infty}{\longrightarrow}\bar\mu.$$
   
    \end{itemize}
\end{lemma}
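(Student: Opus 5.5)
Both parts rest on one observation. $A_\alpha$ is translation invariant, $A_\alpha(i,j)=p(j-i)$ with $p(k)=c(\alpha)|k|^{-1-\alpha}$ for $k\neq 0$ and $p(0)=0$. Hence $A_\alpha^n(i,j)=\mathbb{P}(S_n=j-i)$, where $S_n=X_1+\dots+X_n$ is a symmetric random walk on $\mathbb{Z}$ with i.i.d.\ steps of law $p$.

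\textbf{Part $(i)$.} By translation invariance the supremum over $i$ is irrelevant:
\[
\sum_j \big(A_\alpha^n(i,j)\big)^2=\sum_k \mathbb{P}(S_n=k)^2 \le \sup_k \mathbb{P}(S_n=k).
\]
So it suffices to show $\sup_k\mathbb{P}(S_n=k)\to 0$, and I would use Fourier analysis for this. Let $\psi(u)=\sum_k p(k)\cos(ku)$ be the characteristic function of the step, which is real. Then
\[
\mathbb{P}(S_n=k)\le \frac{1}{2\pi}\int_{-\pi}^{\pi}|\psi(u)|^n\,du .
\]
The step law is aperiodic and not supported on a lattice coset. Its support contains $1$ and $2$, so $|\psi(u)|<1$ for all $u\in(0,\pi]$, and $\psi(u)\neq -1$ because $p(2)>0$ rules out periodicity. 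Dominated convergence then gives $\int|\psi|^n\to 0$. This part works for every $\alpha>0$.

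\textbf{Part $(ii)$.} For $\alpha\in(0,2)$ the step law lies in the domain of attraction of a symmetric $\alpha$-stable law, since its tails are $\mathbb{P}(|X|>x)\sim \frac{2c(\alpha)}{\alpha}x^{-\alpha}$. Hence $S_n/n^{1/\alpha}\Rightarrow Y_\alpha$, a symmetric $\alpha$-stable variable with a continuous bounded density. I write
\[
(A_\alpha^n\mu)_i=\mathbb{E}[\mu_{i+S_n}]
\]
and split according to the scale $r=Rn^{1/\alpha}$. Let $M_r=\frac{1}{2r+1}\sum_{|j|\le r}\mu_j$. Using the local limit theorem for the lattice walk, $\mathbb{P}(S_n=k)=n^{-1/\alpha}\big(g(k/n^{1/\alpha})+o(1)\big)$ uniformly in $k$, where $g$ is the stable density. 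With this I approximate
\[
\mathbb{E}[\mu_{i+S_n}]\approx\sum_k n^{-1/\alpha}g(k n^{-1/\alpha})\,\mu_{i+k}.
\]
I then apply summation by parts (Abel), writing $g$ in terms of its layer decomposition $g(x)=\int_0^\infty \mathbf 1_{\{g(x)>s\}}\,ds$. Since $g$ is symmetric and unimodal, its level sets are intervals $[-a(s),a(s)]$. The sum therefore becomes an average over $s$ of terms
\[
n^{-1/\alpha}\sum_{|k|\le a(s)n^{1/\alpha}}\mu_{i+k}\approx 2a(s)\,\bar\mu,
\]
by the definition of $\bar\mu$: the shift by $i$ is negligible and intervals whose length tends to infinity give averages tending to $\bar\mu$. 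Integrating in $s$ gives $\bar\mu\int g=\bar\mu$. Tails with $|k|>Rn^{1/\alpha}$ are controlled by boundedness of $\mu$ together with tightness. Small levels $s$, where $a(s)$ is large, are handled by truncation. Near the top level, $a(s)\to 0$, but the contribution there is at most $\|\mu\|_\infty\cdot n^{-1/\alpha}\cdot O(\varepsilon n^{1/\alpha})$, which is small. Unimodality of the symmetric stable density is classical (Wintner). Alternatively, I would avoid it by splitting $\mathbb{Z}$ into blocks of length $\delta n^{1/\alpha}$, on which $g$ is nearly constant, and using block averages close to $\bar\mu$.

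\textbf{Main obstacle.} The delicate step is replacing the centred-interval averages in the definition of $\bar\mu$ by averages over shifted blocks $[m,m+\delta n^{1/\alpha}]$ with $|m|\le Rn^{1/\alpha}$. These follow by differences of centred averages only when the blocks are comparable in size to the radius. I would therefore first choose $\delta$, and then express each block sum as a difference of two centred sums of radii comparable to $n^{1/\alpha}$. The block average becomes $\bar\mu+o(1)\cdot R/\delta$, which is negligible as $n\to\infty$ for fixed $R$ and $\delta$. One caveat remains: this argument handles the centred and symmetric structure well, so I would pair blocks at $\pm m$, using symmetry of $g$, to ensure only symmetric differences are needed.
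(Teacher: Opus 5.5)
Your proposal is correct. Part $(ii)$ follows essentially the paper's scheme, while part $(i)$ takes a genuinely different route.

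In $(i)$ the paper bounds $\sup_k A_\alpha^n(0,k)$ through the stable local limit theorem (Theorem~\ref{thm:tcllocal}) and boundedness of $p_1$. You instead use $\mathbb{P}(S_n=k)\le\frac{1}{2\pi}\int_{-\pi}^{\pi}|\psi(u)|^n\,du$, the strict bound $|\psi(u)|<1$ on $[-\pi,\pi]\setminus\{0\}$ (forced by $p(1),p(2)>0$), and dominated convergence. Your argument is more elementary and genuinely covers every $\alpha>0$, as the statement claims. The paper's local limit theorem is only set up in the appendix for $\alpha\in(0,2)$, so for $\alpha\ge 2$ its proof would tacitly need a Gaussian-domain local limit theorem (with $\sqrt{n\log n}$ scaling at $\alpha=2$). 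What the paper's route buys is the rate $\sup_k A_\alpha^n(0,k)=O(n^{-1/\alpha})$, which is not needed here.

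In $(ii)$ both arguments replace $A_\alpha^n(0,k)$ by $n^{-1/\alpha}p_1(kn^{-1/\alpha})$ (your $g$ is the paper's $p_1$). Both then show that the stable-weighted average of $\mu$ tends to $\bar\mu$. There are two differences.
\begin{itemize}
\item You make the replacement with the uniform local limit theorem on $|k|\le Rn^{1/\alpha}$ plus tightness of $S_n/n^{1/\alpha}$. The paper instead uses the $\ell^1$ version (Theorem~\ref{thm:tcl}) with a cutoff $M_n$ and moment bounds on $|S_n|$. Your version avoids the paper's case split $\alpha>1$ versus $\alpha\le 1$.
\item You carry out explicitly the final step that the paper delegates to Lemma~16 of \cite{delattre2016hawkes}. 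You do this via the layer-cake decomposition over the centred level intervals $[-a(s),a(s)]$ of the symmetric unimodal stable density.
\end{itemize}
In that last step your truncations are not even necessary. The bound $n^{-1/\alpha}\#\{k:|k|\le a(s)n^{1/\alpha}\}\le 2a(s)+1$ is integrable over $s\in(0,p_1(0))$, so dominated convergence in $s$ applies directly. The shift by $i$ costs only $O(|i|\,n^{-1/\alpha})$. Your block alternative with blocks paired at $\pm m$ is also sound, because the centred-average errors are $o(n^{1/\alpha})$ uniformly over all radii up to $(R+\delta)n^{1/\alpha}$.
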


\begin{proof}
(i) $\alpha>0:$ Firstly, we have $A_\alpha^n(i,j)=A_\alpha^n(0,j-i).$ Using that $A_\alpha$ is stochastic, we deduce that
\begin{equation}\label{16}
\varepsilon_n = \sum_{j\in\mathbb{Z}}(A_\alpha^n(i,j))^2 \leq  \sum_{j\in\mathbb{Z}}(A_\alpha^n(i,j)) \sup_{k\in\mathbb{Z}}(A_\alpha^n(0,k))\leq \sup_{k\in\mathbb{Z}}(A_\alpha^n(0,k)).
\end{equation}
            In addition, by Theorem~\ref{thm:tcllocal} and Lemma~\ref{lem:pn}, one has
\begin{eqnarray*}
A_\alpha^n(0,j)&\leq& p_n(j) + \left(A_\alpha^n(0,j) - p_n(j)\right) \\
                &\leq&\frac{1}{n^{1/\alpha}}p_1\left(j/n^{1/\alpha}\right)+o\left(n^{-1/\alpha}\right)\rightarrow 0\ as\ n\rightarrow\infty
\end{eqnarray*}
Therefore, $\varepsilon_n\rightarrow 0$  as $n\rightarrow\infty.$

(ii) Let $i\in\mathbb{Z}.$ We aim to prove that
\begin{equation}\label{D2i}
\lim_{n \to \infty}
\left(
(A_{\alpha}^n \bar{\mu})_i
-
\sum_{j \in \mathbb{Z}} p_n(j)\,\mu_j
\right)
= 0.
\end{equation}

Let us begin with the case~$\alpha>1$. For each $n \ge 1$, we choose $M_n$ satisfying
$n \ll M_n \ll n^{2/{\alpha}},$ (we can choose $M_n=n^{(1+2/{\alpha})/2}$).
 As  $(\mu_j)_{j \in \mathbb {Z}}$ is a bounded family, we have
\begin{align*}
\left| (A_{\alpha}^n \bar{\mu})_i - \sum_{j \in \mathbb{Z}} p_n(j)\,\mu_j \right|=& \left| \sum_{j \in \mathbb{Z}} A_{\alpha}^n(i,j)\, \mu_j - \sum_{j \in \mathbb{Z}} p_n(j)\, \mu_j \right|= \left| \sum_{j \in \mathbb{Z}} A_{\alpha}^n(0,j-i)\, \mu_j - \sum_{j \in \mathbb{Z}} p_n(j)\, \mu_j \right| \\
&\leq C \sum_{j \in \mathbb{Z}} \left| A_{\alpha}^n(0,j-i) - p_n(j) \right| \\
&\leq C \Bigg[
    \sum_{|j| \leq M_n} \left| A_{\alpha}^n(0,j-i) - p_n(j)\right|+ \sum_{|j| > M_n}  A_{\alpha}^n(0,j-i)+
    \sum_{|j| > M_n} p_n(j)\Bigg] \\
&\leq C \, [Y_1 + Y_2 + Y_3].
\end{align*}
For $Y_2,$ since
$$\sum_{j \in \mathbb{Z}} |j| \, A_{\alpha}^n(0,j)=\mathbb{E}[ \left| S_n \right|]\leq C n,$$ we have
\begin{align*}
Y_2=\sum_{|j| > M_n} A_{\alpha}^n(0,j-i)
&\leq \frac{1}{M_n} \sum_{j \in \mathbb{Z}} |j| \, A_{\alpha}^n(0,j-i)
= \frac{1}{M_n} \sum_{k \in \mathbb{Z}} |i+k| \, A_{\alpha}^n(0,k) \\
&\leq \frac{C}{M_n} (n + |i|) \;\longrightarrow 0 \quad \text{as } n \to \infty.
\end{align*}
Similarly, for $Y_3$,
$$Y_3=\sum_{|j| > M_n} p_n(j) \leq \frac{C}{M_n}n.$$
Finally, for $Y_1$,
\begin{align*}
    Y_1=&\sum_{|j| \leq M_n} \left| A_{\alpha}^n(0,j-i) - p_n(j)\right|\\
    \leq&\sum_{|j|\leq M_n} \left| A_{\alpha}^n(0,j-i)- p_n(j-i)\right|+\sum_{|j| \leq M_n} \left| p_n(j) - p_n(j-i)\right|\\\leq&\sum_{j\in \mathbb{Z}} \left| A_{\alpha}^n(0,j-i)- p_n(j-i)\right|+\sum_{|j|\leq M_n} \left| p_n(j) - p_n(j-i)\right|
\end{align*}
From Theorem \ref{thm:tcl},
$$\sum_{j\in \mathbb{Z}} \left| A_{\alpha}^n(0,j-i)- p_n(j-i)\right|$$ tends to $0$ as $n$ goes to $\infty,$ and from lemma \ref{lem:pn},
$$\left| p_n(j) - p_n(j-i) \right|
\leq \frac{1}{n^{1/\alpha}}
\left|
p_1\!\left(\frac{j}{n^{1/\alpha}}\right)
-
p_1\!\left(\frac{j-i}{n^{1/\alpha}}\right)
\right|\leq C \frac{|i|}{n^{2/\alpha}},$$
hence
$$\sum_{|j|\leq M_n} \left| p_n(j) - p_n(j-i)\right|\leq C|i|\frac{M_n}{n^{2/\alpha}},$$
tends to $0$ as $n$ goes to $\infty.$
Since $n\ll M_n\ll n^{2/\alpha},$ for example with the choice of $M_n=n^{(1+2/{\alpha})/2}$, we obtain the convergence stated in the lemma.

Now, let us treat the case $\alpha\leq 1$. The control of $Y_1$ is exactly the same. And, for $Y_2,Y_3$, it is no longer possible to consider $\mathbb{E}|S_n|$ since it is infinite. Instead, we consider $\mathbb{E}|S_n|^{\alpha-\eta}$ (for some small enough $\eta>0$) and the fact that $x\mapsto |x|^\beta$ is sub-additive for $0<\beta<1$. This gives
$$\mathbb{E}\left[\left|S_n\right|^{\alpha - \eta}\right]=\mathbb{E}\left[\left|\sum_{k=1}^n X_k\right|^{\alpha - \eta}\right]\leq \sum_{k=1}^n\mathbb{E}\left|X_k\right|^{\alpha - \eta} = C n.$$
Which gives a control for $Y_2$ and $Y_3$ of order $n/M_n^{\alpha - \eta}$. So, to obtain the convergence~\eqref{D2i}, $M_n$ has to satisfy $n^{1/(\alpha-\eta)}\ll M_n\ll n^{2/\alpha}$, where $\eta$ is small enough. So~\eqref{D2i} is proved. Then the statement~$(ii)$ follows from the same arguments as the ones of the end of the proof of Lemma~16 of \cite{delattre2016hawkes}.
\end{proof}

To study the asymptotics behavior of $\mathbb{E}{[Z^i_t]}$, we rely on Laplace transform and a Tauberian theorem.

 In the rest of this section, we assume that
\begin{equation}\label{eq:phi}
\phi(t) \leq C e^{\kappa t}.
\end{equation}
Let $\theta$ such that
$$\widehat{\phi} (\theta) = \int_0^\infty \phi(t) e^{-\theta t}dt=1,$$
which exists by the intermediate value theorem: $\widehat{\phi}(0)=I>1$ and $\widehat{\phi}(p)$ converges to zero as $p$ goes to infinity by~\eqref{eq:phi}. And we also assume that~\eqref{eq:phi} holds true for some $\kappa<\theta$.

Recall that
$$m^i_t = \mathbb{E}\left[Z^i_t\right]\textrm{ and }x^i_t = \mathbb{E}\left[\lambda^i_t\right].$$

Firstly, we prove that the Laplace transforms of the functions $x^i$ are well-defined and finite on~$(\theta;+\infty)$.
\begin{lemma}
For all $i\in\mathbb{Z}$, for any~$\beta>\theta,$
$$\int_0^{+\infty} x^i_t e^{-\beta t}dt<\infty.$$
\end{lemma}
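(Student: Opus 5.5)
Taking expectations in the intensity formula gives the linear system
\begin{equation*}
x^i_t=\mu_i+\sum_{j\neq i}A_\alpha(i,j)\int_0^t\phi(t-s)\,x^j_s\,ds ,
\end{equation*}
since $\mathbb{E}[dZ^j_s]=x^j_s\,ds$. The well-posedness result quoted after Assumption~\ref{hyp1} makes $m^j_t$ finite and locally bounded. The plan is not to work with individual $i$ but with the supremum $X_t=\sup_{i}x^i_t$ (or $\sup_{i}\sup_{s\le t}x^i_s$ for monotonicity). Because $A_\alpha$ is stochastic and $\mu$ is bounded by some $\|\mu\|_\infty$, this gives the scalar renewal-type inequality
\begin{equation*}
X_t\le \|\mu\|_\infty+\int_0^t\phi(t-s)X_s\,ds .
\end{equation*}
This step needs $X_t<\infty$ locally uniformly. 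The translation invariance $A_\alpha(i,j)=A_\alpha(0,j-i)$ together with boundedness of $\mu$ should give this via the Picard iteration defining the process. Alternatively, compare with the solution $\bar x$ of the homogeneous system with $\mu_i\equiv\|\mu\|_\infty$: that solution is independent of $i$, dominates $x^i$ by monotonicity of the iteration, and solves the scalar equation $\bar x_t=\|\mu\|_\infty+\int_0^t\phi(t-s)\bar x_s\,ds$.

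I will use the comparison route. Iterating the scalar equation gives
\begin{equation*}
x^i_t\le\bar x_t=\|\mu\|_\infty\sum_{n\ge0}\int_0^t\phi^{*n}(s)\,ds ,
\end{equation*}
with the convention $\phi^{*0}=\delta_0$. Multiplying by $e^{-\beta t}$ and integrating with Tonelli (everything is non-negative), the Laplace transform of $\phi^{*n}$ is $\widehat\phi(\beta)^n$. Hence
\begin{equation*}
\int_0^\infty \bar x_t e^{-\beta t}\,dt=\frac{\|\mu\|_\infty}{\beta}\sum_{n\ge0}\widehat\phi(\beta)^n=\frac{\|\mu\|_\infty}{\beta\,(1-\widehat\phi(\beta))} .
\end{equation*}
By \eqref{eq:phi} with $\kappa<\theta$, $\widehat\phi(\beta)$ is finite for $\beta>\kappa$. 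It is strictly decreasing in $\beta$ unless $\phi=0$ a.e., which is excluded since $I>1$. So $\widehat\phi(\beta)<\widehat\phi(\theta)=1$ for $\beta>\theta$, the geometric series converges, and the claim follows since $0\le x^i_t\le\bar x_t$.

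The main obstacle is justifying the domination $x^i\le\bar x$ rigorously, because the system is infinite-dimensional and a priori uniqueness of solutions of the expectation system is not obvious. I would argue at the level of the Picard scheme. Set $x^{(0)}=\mu$ and $x^{(k+1)}=\mu+A_\alpha(\phi*x^{(k)})$. By induction on $k$ this gives $x^{(k)}_i\le\bar x^{(k)}$, the scalar iterates. The family $x^i$ is the minimal non-negative solution, the increasing limit of $x^{(k)}$, consistent with the construction in Theorem~6 of \cite{delattre2016hawkes}. Alternatively, I would run a Grönwall-type argument on $\sup_i\sup_{s\le t}|x^i_s-\tilde x^i_s|$ using local integrability of $\phi$ and stochasticity of $A_\alpha$. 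This shows the solution of the expectation system is unique among locally bounded solutions, and uniqueness yields the comparison.
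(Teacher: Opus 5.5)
Your proof is correct, but it reaches the paper's bound with the two reductions done in the opposite order.

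\textbf{How the routes differ.} The paper first integrates against $e^{-\beta t}$ on a finite window $[0,T]$. This gives the vector inequality $F(T)\le C+\rho A_\alpha F(T)$, with $F_i(T)=\int_0^T x^i_te^{-\beta t}dt$ and $\rho=\widehat{\phi}(\beta)<1$. It then iterates the spatial operator in $\ell^\infty$, using $\|A_\alpha\|=1$, to get $F_i(T)\le C/(1-\rho)$ uniformly in $T$. You instead collapse the spatial structure first, via $\sup_i$ or comparison with the $i$-independent solution. You then solve the scalar renewal equation by convolution powers. The constant is the same, $\|\mu\|_\infty/(\beta(1-\widehat{\phi}(\beta)))$. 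Your route also gives the pointwise domination $x^i_t\le\bar x_t$ by a one-dimensional renewal function.

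\textbf{The identification step.} The price of your route is the step you flag. The paper pays it too, silently: its remainder $\rho^{n+1}A_\alpha^{n+1}F(T)$ vanishes only if $\sup_iF_i(T)<\infty$, and that comes from Lemma~\ref{lem27}.

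That lemma is also the cheapest way to close your obstacle. It gives $u_t=\sup_i\sup_{s\le t}x^i_s<\infty$ together with exactly your inequality $u_t\le C+\int_0^t\phi(t-s)u_s\,ds$. After $N$ iterations the remainder is at most $u_t\int_0^t\phi^{\star(N+1)}(s)\,ds\le u_te^{\beta t}\widehat{\phi}(\beta)^{N+1}$, which tends to $0$. Hence $u_t\le\bar x_t$, with no appeal to minimality of the Picard limit and no separate Gr\"onwall uniqueness argument.

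Alternatively, use the uniqueness part of Lemma~\ref{lem27} in the weighted class $\sum_ip_i\sup_{s\le t}m^i_s<\infty$. The true $m$ lies in that class by well-posedness, and so does the time integral of your bounded Picard limit. So the two coincide, which is exactly the comparison you need.
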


\begin{proof}
    Start from
    $$x^i_t = \mu_i + \sum_{j\in\mathbb{Z}}A_\alpha(i,j)\int_0^t \phi(t-s)x^j_sds.$$
Let $\tilde x^i_t = x^i_t e^{-\beta t}$, and $\tilde \phi(t) = \phi(t)e^{-\beta t}$. Then
    $$\tilde x^i_t = \mu_i e^{-\beta t} + \sum_{j\in\mathbb{Z}}A_\alpha(i,j)\int_0^t \tilde \phi(t-s)\tilde x^j_sds.$$
    So,
    \begin{align*}
    \int_0^T \tilde x^i_t dt =& \frac{\mu_i}{\beta}\left(1-e^{-\beta t}\right) + \sum_{j\in\mathbb{Z}} A_\alpha(j,i)\int_0^T\int_0^t \tilde \phi(t-s)\tilde x^j_sds\\
    =&\frac{\mu_i}{\beta}\left(1-e^{-\beta t}\right) + \sum_{j\in\mathbb{Z}} A_\alpha(j,i)\int_0^T \tilde x^j_s\int_s^T \tilde \phi(t-s) dtds\\
    \leq& C + \rho\sum_{j\in\mathbb{Z}} A_\alpha(j,i)\int_0^t\tilde x^j_sds,
    \end{align*}
    with
    $$\rho = \int_0^\infty \tilde \phi(t)dt<1,$$
    since $\beta>\theta.$
Then, $F_i(t) = \int_0^t \tilde x^i_sds$ satisfies
    $$F_i(t) \leq C + \rho \sum_{j\in\mathbb{Z}} A_\alpha(j,i)F_j(t),$$
    so, with matrix notation (where the inequality holds for each term),
    $$F(t) \leq C + \rho A_\alpha\cdot F(t)$$
    and by induction (the iteration is legit since the coefficient of $A_\alpha$ are non-negative),
    $$F(t) \leq C\left[\sum_{k=0}^n \rho^k + \rho^{n+1} A^{n+1}_\alpha \cdot F(t)\right].$$
Then, since $A_\alpha$ is stochastic, $||A_\alpha||=1$, hence $||\rho A_\alpha|| = \rho <1$. So, letting $n$ go to infinity leads to
    $$F(t) \leq C\sum_{k=0}^{+\infty}\rho^k = \frac{C}{1-\rho}.$$
    This implies that, for all~$i,$ $F_i(t) = \int_0^t \tilde x^i_sds$ is bounded uniformly with respect to~$t$, which proves the result.
\end{proof}

Now, we can study the long-time behavior of the functions $m^i_t$ ($i\in\mathbb{Z}$).

\begin{lemma}\label{lem:equivm}
For all~$i\in\mathbb{Z},$
$$m^i_t \sim \frac{\bar\mu}{\theta^2\bar m}e^{\theta t},$$
where $\theta>0$ satisfies $\hat\phi(\theta)=1,$ $\bar \mu$ is defined in Theorem~\ref{thm:sup2}, and $\bar m = \int_0^\infty t\phi(t)e^{-\theta t}dt.$
\end{lemma}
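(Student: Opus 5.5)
Taking Laplace transforms of the renewal-type system satisfied by $x^i_t$, and using the previous lemma, for $p>\theta$ we get
\begin{equation*}
\hat x^i(p) = \frac{\mu_i}{p} + \hat\phi(p)\sum_{j\in\mathbb{Z}}A_\alpha(i,j)\hat x^j(p),
\end{equation*}
and iterating (legitimate since $\hat\phi(p)<1$ for $p>\theta$ and $A_\alpha$ is stochastic) gives
\begin{equation*}
\hat x^i(p)=\frac1p\sum_{n\ge0}\hat\phi(p)^n (A_\alpha^n\mu)_i .
\end{equation*}
Since $m^i_t=\int_0^t x^i_s\,ds$, we have $\hat m^i(p)=\hat x^i(p)/p$. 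The plan is to show that, as $p\downarrow\theta$,
\begin{equation*}
\hat x^i(p)\sim \frac{\bar\mu}{\theta\,\bar m\,(p-\theta)} .
\end{equation*}
We will then transfer this to $e^{-\theta t}x^i_t$ by a Tauberian argument, and finally integrate to get $m^i_t\sim \frac{1}{\theta}x^i_t$.

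\textbf{Laplace asymptotics.} Write $q=\hat\phi(p)\uparrow 1$ as $p\downarrow\theta$. By Lemma~\ref{lem16}(ii), $(A_\alpha^n\mu)_i\to\bar\mu$, and this sequence is bounded by $\sup_j\mu_j$. An Abelian argument for power series then gives $(1-q)\sum_n q^n(A_\alpha^n\mu)_i\to\bar\mu$. Since $\hat\phi$ is differentiable at $\theta$ with $\hat\phi'(\theta)=-\bar m$, we have $1-\hat\phi(p)\sim \bar m(p-\theta)$. Here $\bar m<\infty$ follows from $\kappa<\theta$. This yields the claimed equivalent. For $\alpha\ge 2$, Lemma~\ref{lem16}(ii) is not stated, so I would need the analogous convergence via the CLT as in \cite{delattre2016hawkes}. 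Alternatively, I would note that only Cesàro convergence of $(A^n_\alpha\mu)_i$ is needed for the Abelian step, and that this could come from part (i) plus the local limit theorem.

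\textbf{Tauberian step.} Set $y^i(t)=e^{-\theta t}x^i_t$. Then
\begin{equation*}
\hat y^i(s)=\hat x^i(\theta+s)\sim \frac{\bar\mu}{\theta\bar m}\,\frac1s \qquad (s\downarrow0).
\end{equation*}
Since $y^i\ge0$, Karamata's Tauberian theorem gives
\begin{equation*}
\int_0^T y^i\sim \frac{\bar\mu}{\theta\bar m}\,T ,
\end{equation*}
which is only a Cesàro statement. To upgrade it to $y^i(t)\to \frac{\bar\mu}{\theta\bar m}$, I need some regularity of $y^i$. This is the main obstacle. I would write $x=\mu+\sum_n \phi^{*n}\otimes A^n_\alpha\mu$, decompose $e^{-\theta t}\phi^{*n}$ as the $n$-fold convolution of the probability density $g=\phi e^{-\theta\cdot}$, and use renewal-type smoothing. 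The main alternative is to work directly on $m$: $e^{-\theta t}m^i_t$ is a smoothed version of $y^i$, and one can show it is slowly decreasing. Indeed,
\begin{equation*}
e^{-\theta (t+h)}m^i_{t+h}\ge e^{-\theta h}\, e^{-\theta t}m^i_t ,
\end{equation*}
because $m$ is nondecreasing. This one-sided regularity suffices for the Tauberian theorem with remainder (Wiener–Ikehara/Karamata variant for functions of the form $e^{\theta t}\times$ slowly decreasing).

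\textbf{Conclusion.} Concretely, $\hat m^i(\theta+s)\sim \frac{\bar\mu}{\theta^2\bar m s}$, where the extra factor $1/\theta$ comes from $\hat m^i(p)=\hat x^i(p)/p$ evaluated near $p=\theta$. Combined with the monotonicity of $m^i$, this yields
\begin{equation*}
\int_0^T e^{-\theta t}\,dm^i_t\sim \frac{\bar\mu}{\theta\bar m}\,T .
\end{equation*}
I then deduce $m^i_t e^{-\theta t}\to \frac{\bar\mu}{\theta^2\bar m}$ via the exponential-Tauberian lemma. This last lemma states that if $M$ is nondecreasing and $\int_0^\infty e^{-pt}dM(t)\sim c/(p-\theta)$, then $M(t)\sim (c/\theta)e^{\theta t}$. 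I would prove it by a sandwich over windows $[t,t+h]$, using monotonicity and letting $h\to0$.
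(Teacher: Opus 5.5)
Your Laplace computation and Abelian step are the same as the paper's. The paper works with $y^i_p=\widehat{x^i_te^{-\theta t}}(p)$, which is your $\hat x^i(\theta+p)$. Your caveat that Lemma~\ref{lem16}(ii) is only stated for $\alpha\in(0,2)$ applies equally to the paper.

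\textbf{The gap is the Tauberian step: your ``exponential-Tauberian lemma'' is false.} Take $M(t)=\int_0^t e^{\theta u}(1+\cos 2\pi u)\,du$, which is $C^1$ and nondecreasing, with
\[
\int_0^\infty e^{-pt}\,dM(t)=\frac{1}{p-\theta}+\frac{p-\theta}{(p-\theta)^2+4\pi^2}\sim\frac{1}{p-\theta}.
\]
Yet
\[
e^{-\theta t}M(t)=\frac{1-e^{-\theta t}}{\theta}+\mathrm{Re}\,\frac{e^{2\pi i t}-e^{-\theta t}}{\theta+2\pi i}
\]
has no limit. Each of the tools you invoke fails on this example:
\begin{itemize}
\item The sandwich fails because $\int_0^T e^{-\theta u}dM(u)=cT+o(T)$ says nothing about windows $[t,t+h]$ of bounded length: the $o(T)$ error swamps $ch$.
\item The condition $f(t+h)\ge e^{-\theta h}f(t)$ for $f(t)=e^{-\theta t}m^i_t$ holds for every nondecreasing $M$, including this one. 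So it is not a valid Tauberian condition for Laplace means.
\item Wiener--Ikehara needs $\int_0^\infty e^{-pt}dM(t)-c/(p-\theta)$ to extend continuously to the whole line $\mathrm{Re}\,p=\theta$. In the example this is destroyed by the poles at $\theta\pm 2\pi i$, which are invisible from the real axis.
\end{itemize}
So a real-axis asymptotic, even combined with positivity of $x^i$ and monotonicity of $m^i$, cannot give the pointwise limit. You have to use the structure of the equation.

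\textbf{The renewal route you mentioned and then dropped is the one that closes the argument.} Set $g=\phi e^{-\theta\cdot}$. Inverting your series for $\hat x^i$ term by term (or using Lemma~\ref{lem27}) gives
\[
e^{-\theta t}x^i_t=\mu_ie^{-\theta t}+\sum_{n\ge1}(A_\alpha^n\mu)_i\int_0^t g^{\star n}(s)\,e^{-\theta(t-s)}\,ds .
\]
Here $g$ is a probability density, hence non-lattice, with finite mean $\bar m$ because $\kappa<\theta$. Also $u\mapsto e^{-\theta u}$ is directly Riemann integrable. The key renewal theorem therefore gives $\sum_{n\ge1}\int_0^t g^{\star n}(s)e^{-\theta(t-s)}ds\to 1/(\theta\bar m)$, so this quantity is bounded in $t$.

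Now write $(A^n_\alpha\mu)_i=\bar\mu+\eps_n$. Each fixed $n$ contributes $o(1)$, since $g^{\star n}\in\mathbb{L}^1$. The terms with $n>N$ contribute at most $C\sup_{n>N}|\eps_n|$. Hence $e^{-\theta t}x^i_t\to\bar\mu/(\theta\bar m)$, and then
\[
e^{-\theta t}m^i_t=\int_0^t e^{-\theta(t-s)}e^{-\theta s}x^i_s\,ds\to\frac{\bar\mu}{\theta^2\bar m}.
\]
This uses $\eps_n\to0$ itself, not only its Ces\`aro version, so your Ces\`aro alternative for $\alpha\ge2$ would not suffice here.

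You were right to distrust the passage from the real-axis expansion to a pointwise limit. The paper makes that passage by citing Haar's Tauberian theorem, whose hypotheses also concern the transform along $\mathrm{Re}\,p=\theta$ and are not checked in the proof. The renewal argument above is the sound way to finish.
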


\begin{proof}
Let
$$y^i_p = \widehat{x^i_t e^{-\theta t}}(p),$$
which is well-defined and finite for~$p>0$ thanks to the previous lemma.

  \begin{align*}
  x^i_t =& \mu_i + \sum_{j\in\mathbb{Z}}A_\alpha(i,j)\int_0^t \phi(t-s)x^j_sds\\
  x^i_t e^{-\theta t} =& \mu_ie^{-\theta t} + \sum_{j\in\mathbb{Z}}A_\alpha(i,j)\int_0^t \phi(t-s)x^j_se^{-\theta t}ds\\
   \bar{x}^i_t =& \mu_ie^{-\theta t} + \sum_{j\in\mathbb{Z}}A_\alpha(i,j)\int_0^t \bar\phi(t-s)\bar x^j_s ds,
   \end{align*}
   with $\bar\phi(t) = \phi(t)e^{-\theta t}$ and $\bar x^i_t = x^i_t e^{-\theta t}$. By standard properties of Laplace transform,
  $$y^i_p = \widehat{x^i_t e^{-\theta t}}(p)= \frac{\mu_i }{\theta +p}+\sum_{j\in\mathbb{Z}}A_\alpha(i,j)\widehat{\bar{\phi}}(p) y^j_p = \frac{\mu_i}{\theta + p} + \widehat{\bar\phi}(p) \left(A_\alpha \cdot y_p\right)_i.$$
  Then, we demonstrate by induction on~$N$ that
$$y^i_p = \frac{1}{\theta + p}\sum_{n=0}^{N} \widehat{\bar \phi}(p)^n \left(A^n_\alpha \mu\right)_i + \widehat{\bar\phi}(p)^{N+1} \left(A_\alpha^{N+1}\cdot y_p\right)_i$$
Then, we make~$N$ tend to infinity

\begin{align*}
    y^i_p =& \frac1{\theta + p}\sum_{n=0}^{+\infty} \widehat{\bar \phi}(p)^n \left(A^n_\alpha \mu\right)_i\\
    =& \frac{\bar\mu}{\theta + p}\sum_{n=0}^{+\infty} \widehat{\bar \phi}(p)^n + \frac1{\theta + p}\sum_{n=0}^{+\infty} \widehat{\bar \phi}(p)^n \left(\left(A^n_\alpha \mu\right)_i - \bar\mu\right)\\
    =& \frac{\bar\mu}{\theta+p}\cdot\frac{1}{1-\widehat{\bar \phi}(p)} + o\left(\frac{1}{1-\widehat{\bar \phi}(p)}\right),
\end{align*}
where Lemma~\ref{lem16}.(ii) is used at the last line. In addition,
$$\widehat{\bar \phi}(p) = 1- \bar m\, p + O(p^2),$$
with $\bar m = \int_0^\infty t\bar \phi(t)dt = -\widehat{\bar \phi}'(0).$ So
$$\frac{1}{1-\widehat{\bar \phi}(p)} = \left(\bar m\, p + O(p^2)\right)^{-1} = \frac1{\bar m\, p}\left(1 + O(p)\right)^{-1} = \frac{1}{\bar m\, p} + O(1).$$
Hence
$$y^i_p = \frac{\bar \mu}{\theta\,\bar m\, p} + o(1/p).$$
Whence, by classical Tauberian-type results (e.g. Haar's Tauberian theorem), this entails that
$$x^i_te^{-\theta t}\underset{t\to\infty}{\longrightarrow} \frac{\bar\mu}{\theta\,\bar m}.$$

$$x^i_t\sim \frac{\bar\mu}{\theta\,\bar m} e^{\theta t}$$
Finally, by classical analytical arguments about exponential functions,
$$m^i_t = \int_0^t x^i_s ds \sim \frac{\bar\mu}{\theta^2\bar m}e^{\theta t},$$
which ends the proof.
\end{proof}

Let us now conclude the proof of Theorem~\ref{thm:sup2}.
\begin{proof}[Proof of Theorem~\ref{thm:sup2}]
We use standard calculations of stochastic calculus and convolution equations. Recall that
$$m^i_t =\mathbb{E}[Z_t^i]= \mu_i t + \int_0^t\phi(t-s)\left(A_\alpha \cdot m_s\right)^ids.$$
Consider, for each $i \in \mathbb{Z}$, the martingale $$M^i_t = \int_0^t\int_0^{\infty}\textbf{1}_{\{z\leq\mu_i+\sum_{j\neq i}\frac{c(\alpha)}{\vert i-j\vert^{\alpha +1}}\int_0^{s-}\varphi(s-u)dZ_u^j\}}\tilde{\pi}^i(dsdz)$$
where $ \tilde{\pi}^i(dsdz)=\pi^i(dsdz)-dsdz.$ And let us define
\begin{align*}
U_t^i=& Z_t^i-m_t^i =M_t^i+\int_0^t\sum_{j\neq i}\frac{c(\alpha)}{\vert i-j\vert^{\alpha+1}}\varphi(t-s)(Z_s^j-m_s^j)ds\\=&M_t^i+\int_0^t \varphi(t-s)(A_\alpha \cdot U_s)^ids.
\end{align*}
This leads to
$$U_t^i = M_t^i +\sum_{n\geqslant 1}\int_0^t \varphi^{*n}(t-s){(A_\alpha^nM_s)}_ids,$$
hence
\begin{equation}\label{eqU}
\vert U_t^i\vert \leq \vert M_t^i\vert+\sum_{n\geqslant 1}\int_0^t \varphi^{*n}(t-s)\vert {(A_\alpha^n \cdot M_s)}^i\vert ds.
\end{equation}
We have
\begin{equation} \label{11}
\mathbb{E}\left[\vert M_t^i\vert\right]\leq \mathbb{E}\left[\left[\vert M_t^i,M_t^i\vert\right]\right]^{1/2}=\mathbb{E}\left[\left[Z_t^i\right]\right]^{1/2}=(m_t^i)^{1/2}\leq C e^{{\theta}t/2},
\end{equation}
where the last inequality comes from Lemma~\ref{lem:equivm}. In addition we can show that
\begin{equation}\label{11'}
\mathbb{E}[(A^n M^i_t)^2]=\sum_{j\in\mathbb{Z}}(A^n(i,j))^2\mathbb{E} {\vert M_t^j\vert}^2\leq \sum_{j\in\mathbb{Z}}(A^n(i,j))^2\mathbb{E} {\vert Z_t^j\vert}\leq Ce^{\theta t}\xi_n,
\end{equation}
where
$$\xi_n = \underset{i\in\mathbb{Z}}{\sup}\sum_{j\in\mathbb{Z}}(A^n(i,j))^2$$
vanishes by Lemma~\ref{lem16}.(i). Therefore, using equations~\eqref{11} and~\eqref{11'}, we can provide an alternative expression for equation (\ref{eqU}) as follows:
 $$\mathbb{E}\vert U_{t}^{i}\vert\leq C e^{-\theta t/2}+C e^{\theta t} \sum_{n\geq 1}\int_{0}^{t}\varphi^{*n}(t-s)\xi_{n}^{1/2}e^{{\theta}s/2}ds. $$

 Then by Lemma~\ref{lem26d},
 \begin{equation}\label{eqU0}
 \lim_{t\rightarrow\infty}\frac{\mathbb{E}\vert U_{t}^{i}\vert}{e^{\theta t}}=0.
 \end{equation}
Finally, from~\eqref{eqU0} and Lemma~\ref{lem:equivm},
 \begin{equation*}
\mathbb{E}\Big[ \Big\vert\frac{Z_t^i}{e^{\theta t}}-\frac{\bar\mu}{\theta^2 \bar m} \Big\vert\Big]\leq \frac{\mathbb{E}\vert U_{t}^{i}\vert}{e^{\theta t}}+\Big[\Big\vert\frac{m_t^i}{e^{\theta t}}-\frac{\bar\mu}{\theta^2 \bar m}\Big\vert\Big]
\end{equation*}
tends to $0$ as $t\to\infty.$ This completes the proof.
\end{proof}



\appendix

\section{\texorpdfstring{$\alpha$-stable distribution}{alpha-stable distribution}}\label{appenda}

Let us fix some $\alpha\in(0;2).$ Let $X_k$ ($k\in\mathbb{N}$) be i.i.d. random variables whose law is given by
$$\mathbb{P}\left(X_1=n\right) = \left\{\begin{array}{ll}\frac{c(\alpha)}{|n|^{1+\alpha}}&\textrm{if }n\in\mathbb{Z}^*,\\0&\textrm{otherwise},\end{array}\right.$$
and define $S_n$ the partial sum of the $X_k$ ($1\leq k\leq n$).

Let $Y_k$ ($k\in\mathbb{N}$) be i.i.d. random variables whose law is the symmetric stable law of parameter~$\alpha$: $S_\alpha(\sigma,0,0).$ Classically, the characteristic function of~$Y_1$ is
$$\Phi_\alpha:t\in\mathbb{R}\longmapsto e^{-|c\,t|^\alpha},$$
which is $\mathbb{L}^1$. Hence by Fourier inversion (see Theorem~$XV.3$ of \cite{feller1966}), the density function of $Y_1$ is
\begin{align*}
    p_1(x) =& \frac1{2\pi}\int_{-\infty}^{+\infty} e^{|c\,t|^\alpha + i\,t\,x}dt\\
    =&\frac1{2\pi}\int_{-\infty}^{+\infty} e^{|c\,t|^\alpha}\left(\cos(t\,x)+i\sin(t\,x)\right)dt\\
    =&\frac1\pi\int_0^{+\infty} e^{-c^{\alpha}\,t^\alpha}\cos(t\,x)dt.
\end{align*}
And, using classical stability properties of $\alpha$-stable laws, the density function of the partial sum of $Y_k$ ($1\leq k\leq n$) is
$$p_n(x)=\frac{1}{n^{1/\alpha}}p_1\left(\frac{x}{n^{1/\alpha}}\right).$$

The following lemma states properties of $p_1$ that are required in our proofs.
\begin{lemma}\label{lem:pn}
The function $p_1$ is bounded and Lipschitz continuous.
\end{lemma}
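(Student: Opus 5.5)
Both properties follow directly from the Fourier representation of $p_1$ recalled just above,
$$p_1(x)=\frac1\pi\int_0^{+\infty} e^{-c^\alpha t^\alpha}\cos(t\,x)\,dt .$$
The plan is to bound $p_1$ and its derivative by dominating the integrand uniformly in $x$. The only analytic input needed is that $t\mapsto e^{-c^\alpha t^\alpha}$ and $t\mapsto t\,e^{-c^\alpha t^\alpha}$ are integrable on $[0,+\infty)$ for every $\alpha\in(0;2)$ and $c>0$.

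\emph{Boundedness.} Since $|\cos(tx)|\le 1$, we get for all $x\in\mathbb{R}$
$$|p_1(x)|\le \frac1\pi\int_0^{+\infty}e^{-c^\alpha t^\alpha}dt .$$
Substituting $u=c^\alpha t^\alpha$ shows this integral equals $\Gamma(1+1/\alpha)/(c\pi)<\infty$.

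\emph{Lipschitz continuity.} First use $|\cos(tx)-\cos(ty)|\le t|x-y|$, which is the mean value theorem applied to $x\mapsto\cos(tx)$. This gives
$$|p_1(x)-p_1(y)|\le \frac{|x-y|}{\pi}\int_0^{+\infty} t\,e^{-c^\alpha t^\alpha}dt .$$
By the same substitution, the integral equals $\Gamma(2/\alpha)/(\alpha c^2)<\infty$. The Lipschitz constant is therefore explicit and independent of $x,y$.

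Equivalently, one could differentiate under the integral sign by dominated convergence, with dominating function $t e^{-c^\alpha t^\alpha}$, and bound $\|p_1'\|_\infty$ by the same constant. The direct difference estimate avoids having to justify the interchange.

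The only step requiring care is that the representation of $p_1$ is legitimate. This holds because $\Phi_\alpha\in\mathbb{L}^1$, so Fourier inversion applies, and the exponent in that formula must be read as $-|ct|^\alpha$. After that, both integrability facts are elementary Gamma-function computations, so I expect no real obstacle.

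As a by-product, $p_n(x)=n^{-1/\alpha}p_1(x/n^{1/\alpha})$ is bounded by $C n^{-1/\alpha}$ and is Lipschitz with constant $C n^{-2/\alpha}$. These are the forms used in the proof of Lemma~\ref{lem16}.
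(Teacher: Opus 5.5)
Your proof is correct: bounding the inversion integral by $\frac1\pi\int_0^{\infty}e^{-c^\alpha t^\alpha}dt$, and by $\frac{|x-y|}{\pi}\int_0^\infty t\,e^{-c^\alpha t^\alpha}dt$ via $|\cos(tx)-\cos(ty)|\le t|x-y|$, gives both claims, and your Gamma-function constants check out. The paper states this lemma without proof, and your argument is the standard one implicit in the Fourier representation it displays, with the exponent read as $-|c\,t|^\alpha$ as you correctly note.
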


Next, by standard calculation, we have $\mathbb{P}(|X|>x) \sim c x^{-\alpha},$ so by Corollary XVII.5.2 of \cite{feller1966}, the law of $X_1$ belongs to the domain of attraction of a symetric stable law $S_\alpha(\sigma,0,0)$. Hence, there exist generalized central limit theorems to approximate the law of $S_n$ with law of density~$p_n$.

\begin{theorem}[Theorem~1.10 of \cite{szewczak2023classical}]\label{thm:tcllocal}
With the notation of this section,
    $$\underset{j\in\mathbb{Z}}{\sup}\left|\mathbb{P}\left(S_n = j\right) - p_n(j)\right| = o\left(n^{-1/\alpha}\right).$$
\end{theorem}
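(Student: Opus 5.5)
This is the local generalized central limit theorem (Gnedenko's local limit theorem for lattice variables in the domain of attraction of a stable law). I would prove it by Fourier inversion on the lattice. The first check is that the support of $X_1$ is $\mathbb{Z}^*$, so the lattice is $\mathbb{Z}$ with span $1$ and is not contained in any proper sublattice. Its characteristic function $\psi(t)=\mathbb{E}[e^{itX_1}]$ therefore satisfies $|\psi(t)|<1$ for $t\in(0,2\pi)$, and by symmetry $\psi$ is real.

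Set $a_n=n^{1/\alpha}$. For $j\in\mathbb{Z}$ we have
\begin{equation*}
\mathbb{P}(S_n=j)=\frac1{2\pi}\int_{-\pi}^{\pi}\psi(t)^ne^{-itj}\,dt=\frac{1}{2\pi a_n}\int_{-\pi a_n}^{\pi a_n}\psi(u/a_n)^ne^{-iuj/a_n}\,du ,
\end{equation*}
and also $p_n(j)=\frac{1}{2\pi a_n}\int_{\mathbb{R}}e^{-|cu|^\alpha}e^{-iuj/a_n}\,du$. Multiplying the difference by $a_n$ and bounding the exponentials by $1$ gives, uniformly in $j$,
\begin{equation*}
a_n\left|\mathbb{P}(S_n=j)-p_n(j)\right|\le\frac1{2\pi}\int_{|u|\le\pi a_n}\left|\psi(u/a_n)^n-e^{-|cu|^\alpha}\right|du+\frac1{2\pi}\int_{|u|>\pi a_n}e^{-|cu|^\alpha}du .
\end{equation*}
The last term tends to $0$. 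It therefore suffices to show that the first integral vanishes. I would split it into three regions: $|u|\le A$, $A<|u|\le\delta a_n$, and $\delta a_n<|u|\le\pi a_n$.

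The key input is the expansion $1-\psi(t)\sim c^\alpha|t|^\alpha$ as $t\to0$, with the constant chosen so that it matches $p_1$. For $\alpha\in(0,2)$ this follows from the tail $\mathbb{P}(|X_1|>x)\sim Cx^{-\alpha}$ by a standard computation: write $1-\psi(t)=2c(\alpha)\sum_{n\ge1}(1-\cos nt)n^{-1-\alpha}$ and compare it with the Riemann integral $\int_0^\infty(1-\cos s)s^{-1-\alpha}ds$. Equivalently, one can invoke the domain-of-attraction statement already cited.

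The three regions are then handled as follows.
\begin{itemize}
\item On $|u|\le A$, we get $\psi(u/a_n)^n=\exp(n\log\psi(u/a_n))\to e^{-|cu|^\alpha}$ uniformly, so this piece tends to $0$ for fixed $A$.
\item On $A<|u|\le\delta a_n$, with $\delta$ small, the expansion gives $1-\psi(t)\ge \tfrac12 c^\alpha|t|^\alpha$ for $|t|\le\delta$. Hence $|\psi(u/a_n)|^n\le \exp(-\tfrac12|cu|^\alpha)$, which is integrable. This piece, together with the Gaussian-like tail of $e^{-|cu|^\alpha}$, is then at most $\varepsilon(A)\to0$ as $A\to\infty$, uniformly in $n$ (dominated convergence).
\item On $\delta a_n<|u|\le\pi a_n$, aperiodicity and continuity give $\sup_{\delta\le|t|\le\pi}|\psi(t)|=q<1$. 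This piece is then at most $2\pi a_n q^n\to0$.
\end{itemize}
Letting $n\to\infty$ and then $A\to\infty$ gives the $o(a_n^{-1})=o(n^{-1/\alpha})$ bound.

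I expect the main obstacle to be the second region. It needs the uniform lower bound $1-\psi(t)\gtrsim|t|^\alpha$ near $0$, with the constant correctly identified, so that both the domination and the matching of the limit to $p_1$ hold. This rests on the precise small-$t$ asymptotics of $\psi$. Since the lattice law is explicit, I would derive them directly from the series above rather than through general regular-variation theory.
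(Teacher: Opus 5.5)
Your proof is correct, but it is not the paper's route, since the paper gives no argument for this statement. It deduces from Feller's Corollary XVII.5.2 \cite{feller1966} that $X_1$ lies in the domain of attraction of a symmetric stable law. It then imports the statement as Theorem~1.10 of \cite{szewczak2023classical}, which is Gnedenko's local limit theorem for lattice laws.

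You instead run the classical Fourier-inversion proof of Gnedenko's theorem directly on this explicit law. You use the three-region split, with the domination $|\psi(u/a_n)|^n\le e^{-|cu|^\alpha/2}$ on $A<|u|\le\delta a_n$ and aperiodicity on the outer region.

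The citation buys brevity and generality, since it allows arbitrary regularly varying norming and centering. Your route is self-contained and exposes exactly the hypotheses the citation uses silently:
\begin{itemize}
\item span $1$, which holds because the support contains $1$ and $2$;
\item no centering and no slowly varying correction, because symmetry and the exact power tail give $1-\psi(t)\sim c^\alpha|t|^\alpha$, so $a_n=n^{1/\alpha}$ exactly;
\item most importantly, the scale of $p_1$.
\end{itemize}
The paper never fixes $c$ (or $\sigma$) in $\Phi_\alpha$, and the statement is true only for $c^\alpha=2c(\alpha)\int_0^\infty(1-\cos s)\,s^{-1-\alpha}\,ds$. So you should not say the constant in the expansion is chosen to match $p_1$. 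Your Riemann-sum computation should instead be read as \emph{determining} the scale of $p_1$.

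Two minor points. On the outer region $\delta a_n<|u|\le\pi a_n$ you also need the term $\int_{|u|>\delta a_n}e^{-|cu|^\alpha}\,du$, which is trivially $o(1)$. Also, summing your uniform bound over $|j|\le Ka_n$, together with tightness of $S_n/a_n$ and the tail of $p_1$, gives the summed statement of Theorem~\ref{thm:tcl} as well, which the paper likewise only cites.
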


\begin{theorem}[Theorem~1.11 of \cite{szewczak2023classical}]\label{thm:tcl}
With the notation of this section,
    $$\sum_{j\in\mathbb{Z}}\left|\mathbb{P}\left(S_n = j\right) - p_n(j)\right|\underset{n\to\infty}{\longrightarrow}0.$$
\end{theorem}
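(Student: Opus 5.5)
The statement in question is Theorem~\ref{thm:tcl}: the total variation distance between the law of $S_n$ on $\mathbb{Z}$ and the discretized stable density $j\mapsto p_n(j)$ tends to $0$. I plan to derive it from the local limit theorem, Theorem~\ref{thm:tcllocal}, together with tightness of $S_n/n^{1/\alpha}$. This is the standard way to upgrade a uniform local estimate to an $\ell^1$ estimate.

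\textbf{Step 1 (splitting).} Fix $K>0$ and split the sum into the bulk $|j|\le K n^{1/\alpha}$ and the tails $|j|>Kn^{1/\alpha}$. On the bulk there are at most $2Kn^{1/\alpha}+1$ terms, and by Theorem~\ref{thm:tcllocal} each is $o(n^{-1/\alpha})$ uniformly in $j$. Hence the bulk contribution is at most $(2K n^{1/\alpha}+1)\,o(n^{-1/\alpha})=o(1)$ for each fixed $K$.

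\textbf{Step 2 (tails).} Bound the tail by
$$\sum_{|j|>Kn^{1/\alpha}}\mathbb{P}(S_n=j)+\sum_{|j|>Kn^{1/\alpha}}p_n(j).$$
For the first term, the law of $X_1$ lies in the domain of attraction of $S_\alpha(\sigma,0,0)$ (Corollary XVII.5.2 of \cite{feller1966}) with normalization $n^{1/\alpha}$, since $\mathbb{P}(|X|>x)\sim cx^{-\alpha}$. So $S_n/n^{1/\alpha}$ converges in law to $Y_1$, and therefore
$$\limsup_n \mathbb{P}(|S_n|>Kn^{1/\alpha})\le \mathbb{P}(|Y_1|\ge K).$$
For the second term, write $p_n(j)=n^{-1/\alpha}p_1(j/n^{1/\alpha})$. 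This is a Riemann sum with mesh $n^{-1/\alpha}$ of $p_1$ over $\{|x|>K\}$. Because $p_1$ is bounded and Lipschitz (Lemma~\ref{lem:pn}), it is uniformly continuous, and the Riemann sum converges to $\int_{|x|>K}p_1=\mathbb{P}(|Y_1|>K)$. The one delicate point is convergence of a Riemann sum on an unbounded domain. I would handle it either by using the monotone decay of $p_1$ in $|x|$ (symmetric stable densities are unimodal), which gives the comparison $n^{-1/\alpha}p_1(j n^{-1/\alpha})\le\int_{(|j|-1)n^{-1/\alpha}}^{|j|n^{-1/\alpha}}p_1$, or by the known tail bound $p_1(x)\le C|x|^{-1-\alpha}$.

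\textbf{Step 3 (conclusion).} Combining the two steps,
$$\limsup_n\sum_j|\mathbb{P}(S_n=j)-p_n(j)|\le 2\,\mathbb{P}(|Y_1|\ge K-1).$$
Letting $K\to\infty$ gives the result. An alternative route is Scheffé-type: the bulk estimate plus $\sum_j p_n(j)\to 1$ already forces the tails of $\mathbb{P}(S_n=\cdot)$ to vanish. I expect the main obstacle to be the justification that $\sum_j p_n(j)\to1$, i.e. the Riemann-sum control of $p_1$ at infinity, since Lemma~\ref{lem:pn} alone gives only boundedness and Lipschitz continuity. I would settle this with the tail decay $p_1(x)=O(|x|^{-1-\alpha})$ of stable densities.
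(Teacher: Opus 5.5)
Your argument is correct, but it does not follow the paper, because the paper gives no proof of Theorem~\ref{thm:tcl}. It imports the result, like Theorem~\ref{thm:tcllocal}, directly from \cite{szewczak2023classical}. You instead show that the $\ell^1$ statement is a corollary of the local one. The bulk $|j|\le Kn^{1/\alpha}$ contributes $(2Kn^{1/\alpha}+1)\,o(n^{-1/\alpha})=o(1)$, and each of the two tails has $\limsup_n$ at most $\mathbb{P}(|Y_1|\ge K-1)$.

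This reduces the external input to a single local limit theorem. The price is one property of $p_1$ that is not in Lemma~\ref{lem:pn}, and you correctly flag that boundedness and Lipschitz continuity alone do not control the tail Riemann sum $\sum_{|j|>Kn^{1/\alpha}}p_n(j)$. Both of your remedies work. Symmetric stable laws are Gaussian scale mixtures, so $p_1$ decreases in $|x|$, and your comparison gives $\sum_{|j|>Kn^{1/\alpha}}p_n(j)\le\mathbb{P}(|Y_1|>K-n^{-1/\alpha})$. Alternatively, $p_1(x)=O(|x|^{-1-\alpha})$ gives a bound $O(K^{-\alpha})$ uniformly in $n$.

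Your Scheff\'e-type variant is the cleaner way to handle the other tail. Since $\sum_{|j|\le Kn^{1/\alpha}}p_n(j)\to\int_{-K}^{K}p_1$ (Riemann sums of a continuous function on a compact interval), the bulk estimate alone yields $\mathbb{P}(|S_n|>Kn^{1/\alpha})\to\mathbb{P}(|Y_1|>K)$. You then need not check that Feller's domain-of-attraction normalisation produces exactly the scale of $p_1$. The paper leaves the constant in $\Phi_\alpha$ unspecified, and it is in fact pinned down by Theorem~\ref{thm:tcllocal} itself.
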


\section{Technical lemmas}\label{appendb}

\begin{lemma}\label{lem27}
   Under Assumption~\ref{hyp1}, let $\alpha>0$, and recall that $A_\alpha$ is defined in~\eqref{eq:A}, and define
    $$p_0=1~~\textrm{ and }~~p_i=1/|i|^{1+\alpha}~~(i\in\mathbb{Z}^*).$$
    Then, the infinite-dimensional equation
    $$m^i_t = \mu_i t + \int_0^t \phi(t-s) \left(A_\alpha \cdot m_s\right)^ids$$
    has a unique solution satisfying $\sum_i p_i \sup_{s\leq t}m^i_s<\infty$ for all $t\geq 0$. The functions $m^i$ are $C^1$ and satisfies
    $$(m^i)'_t = \mu_i + \int_0^t \phi(t-s) (A_\alpha\cdot m'_s)^ids = \left(\left[I_d+\sum_{n\geq 1}A_\alpha^n\int_0^t\phi^{\star n}(s)ds\right]\mu\right)^i.$$
    And the function $u_t = \sup_i\sup_{s\leq t}(m^i_s)'$ is finite and satisfies $u_t\leq C + \int_0^t \phi(t-s)u_sds$.
\end{lemma}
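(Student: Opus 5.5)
I would follow the strategy of Lemma~27 of \cite{delattre2016hawkes}, working in the weighted space $\ell^1(p)$ of sequences $(v_i)$ with $\|v\|_p=\sum_i p_i|v_i|<\infty$. The key algebraic input is a weighted stochasticity bound
\[
\sum_i p_i A_\alpha(i,j)\le C_\alpha\, p_j \quad\text{for all } j,
\]
so that $A_\alpha$ acts boundedly on $\ell^1(p)$ (by duality, $A_\alpha^\top$ is bounded for the weighted norm). To check it, note that the sum equals $c(\alpha)\sum_{i\neq j}|i|^{-1-\alpha}|i-j|^{-1-\alpha}$, up to the $i=0$ term, which is $c(\alpha)|j|^{-1-\alpha}$. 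Splitting into $|i|\le|j|/2$, $|i-j|\le|j|/2$ and the remaining region, each piece is $O(|j|^{-1-\alpha})$, because both factors are summable. I expect this convolution estimate to be the main obstacle; the rest transfers directly.

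\textbf{Existence and uniqueness.} I would construct the solution by Picard iteration on $[0,T]$, using the norm $\sup_{s\le t}\|m_s\|_p$. The map
\[
\Gamma(m)^i_t=\mu_i t+\int_0^t\phi(t-s)(A_\alpha m_s)^i\,ds
\]
satisfies
\[
\sup_{s\le t}\|\Gamma(m)_s-\Gamma(\tilde m)_s\|_p\le C_\alpha\int_0^t\phi(t-s)\sup_{r\le s}\|m_r-\tilde m_r\|_p\,ds .
\]
A generalized Gronwall argument for convolution inequalities then gives uniqueness. Concretely, iterating produces $\sum_n C_\alpha^n\phi^{*n}$, which converges on compact time intervals because $\phi$ is locally integrable, using $\int_0^t\phi^{*n}\le(\int_0^t\phi)^n$ together with an exponential tilt $e^{-\beta t}$ with $\beta$ large. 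Note that $\mu t\in\ell^1(p)$ since $\mu$ is bounded and $\sum p_i<\infty$. Equivalently, the solution is given explicitly by the series
\[
m_t=\sum_{n\ge0}A_\alpha^n\int_0^t\phi^{*n}\ast(s\mapsto s)\,\mu ,
\]
with the convention $\phi^{*0}=\delta$. This series converges in sup norm since $A_\alpha$ is stochastic: $\|A_\alpha^n\mu\|_\infty\le\|\mu\|_\infty$.

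\textbf{Derivative.} Differentiating the series term by term gives
\[
(m^i)'_t=\Big(\big[I_d+\sum_{n\ge1}A_\alpha^n\textstyle\int_0^t\phi^{*n}(s)ds\big]\mu\Big)^i .
\]
This is justified because the differentiated series converges uniformly on $[0,T]$: its terms are bounded by $\|\mu\|_\infty\int_0^T\phi^{*n}$, and $\sum_n\int_0^T\phi^{*n}<\infty$ by the tilting argument. Each term is continuous in $t$, so $m^i$ is $C^1$. Plugging the series back in shows that $m'$ satisfies $m'_t=\mu+\int_0^t\phi(t-s)A_\alpha m'_s\,ds$.

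\textbf{Bound on $u$.} Finally, $u_t=\sup_i\sup_{s\le t}(m^i_s)'\le\|\mu\|_\infty\sum_{n\ge0}\int_0^t\phi^{*n}<\infty$. Since $A_\alpha$ is stochastic, $(A_\alpha m'_s)^i\le\sup_j (m^j_s)'\le u_s$. Taking the supremum in the equation for $m'$ gives $u_t\le\|\mu\|_\infty+\int_0^t\phi(t-s)u_s\,ds$; here I use that $\int_0^r\phi(r-s)u_s\,ds$ is nondecreasing in $r$ because $u$ is nondecreasing.
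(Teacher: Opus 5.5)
Your proposal is correct and follows the paper's route. The paper's proof defers everything to Lemma~27 of \cite{delattre2016hawkes} and supplies only the new weighted estimate $\sum_{i\neq j}p_i|i-j|^{-1-\alpha}\le C_\alpha p_j$, which gives boundedness of $A_\alpha$ on $\ell^1(p)$ and is cited from Remark~5.(iii) there; this is exactly the key input you isolate and verify by your three-region split. The remaining steps you spell out (Picard/Gronwall in $\ell^1(p)$, the explicit series, term-by-term differentiation and the bound on $u$) are the parts the paper takes over unchanged from \cite{delattre2016hawkes}.
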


\begin{proof}
    The only difference with the proof of Lemma~27 of \cite{delattre2016hawkes} concerns the Step~1 of their proof. We need to show here that, for any vector~$(x_i)_i$,
    $$\sum_{i\in\mathbb{Z}}p_i \left|(A_\alpha x)_i\right|\leq C_\alpha \sum_{i\in\mathbb{Z}} p_i|x_i|.$$
    This inequality comes directly from the fact that, for all~$j$,
    $$\sum_{i\neq j} p_i \frac{1}{|j-i|^{\alpha +1}}\leq C_\alpha p_j,$$
    whose proof follows directly from the computation of the proof of Remark~5.(iii) of \cite{delattre2016hawkes}.
\end{proof}

The next lemma is required to study the supercritical regime.

\begin{lemma}\label{lem26d}
Let $\phi:\mathbb{R}_+\to\mathbb{R}_+$ measurable. Assume that, for some $\kappa<\theta$ and~$C>0$, for any~$t\geq 0,$
$$\varphi(t) \le C e^{\kappa t}.$$
Then, for any vanishing sequence $(\eps_n)_n$
$$e^{-\theta t}\sum_{n=1}^{+\infty}\eps_n \int_0^t \phi^{\star n}(t-s) e^{\theta s/2}ds\underset{t\to\infty}{\longrightarrow}0.$$
\end{lemma}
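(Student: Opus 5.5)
Everything is nonnegative, so the series may be handled term by term. Let $\bar\phi(t)=\phi(t)e^{-\theta t}$. Since $\widehat{\phi}(\theta)=1$, $\bar\phi$ is a probability density on $[0,\infty)$, and $\phi^{\star n}(u)e^{-\theta u}=\bar\phi^{\star n}(u)$. Writing $e^{-\theta t}=e^{-\theta(t-s)}e^{-\theta s}$, the quantity to control becomes
\begin{equation*}
G(t)=\sum_{n\geq 1}\eps_n\int_0^t \bar\phi^{\star n}(t-s)\,e^{-\theta s/2}\,ds=\sum_{n\geq1}\eps_n\,\mathbb{E}\left[e^{-\theta(t-T_n)/2}\mathbf{1}_{\{T_n\leq t\}}\right],
\end{equation*}
where $T_n=\tau_1+\dots+\tau_n$ is a renewal walk with i.i.d.\ increments of density $\bar\phi$. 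Thus $G(t)$ is a weighted renewal-type quantity, and the plan is to show that it vanishes.

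\emph{Step 1: a uniform bound.} We have $\int_0^t \bar\phi^{\star n}(t-s)e^{-\theta s/2}ds\le \sup_{u\le t}\bar\phi^{\star n}(u)\cdot \tfrac{2}{\theta}$. A better route is to bound $\bar\phi$ itself: by assumption $\bar\phi(u)\le Ce^{-(\theta-\kappa)u}\le C$, so every $\bar\phi^{\star n}$, for $n\ge1$, is bounded by $C$. This holds because the convolution of the bounded density $\bar\phi$ with a probability density is bounded by $\sup\bar\phi$. Consequently each term is at most $\tfrac{2C}{\theta}\sup_k\eps_k$. This alone does not sum in $n$, so a renewal-type bound is needed.

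\emph{Step 2: renewal control.} Set $U(u)=\sum_{n\ge1}\bar\phi^{\star n}(u)$, the renewal density. The key estimate is that $U$ is bounded on $[0,\infty)$. The renewal equation gives $U=\bar\phi+\bar\phi\star U$. Here $\bar\phi$ is bounded and has finite mean $\bar m$, and it is directly Riemann integrable because of its exponential tail $e^{-(\theta-\kappa)u}$. The key renewal theorem then gives $U(u)\to 1/\bar m$, and a bounded $\bar\phi$ together with local boundedness yields $\sup_u U(u)=:K<\infty$. Alternatively, boundedness on compacts follows from $U\le C\,\mathbb{E}[\#\{n:T_n\in[u-1,u]\}]+\dots$, which is standard.

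\emph{Step 3: splitting.} Fix $\delta>0$ and choose $N$ such that $\eps_n\le\delta$ for $n>N$. For the tail,
\begin{equation*}
\sum_{n>N}\eps_n\int_0^t\bar\phi^{\star n}(t-s)e^{-\theta s/2}ds\le \delta\int_0^t U(t-s)e^{-\theta s/2}ds\le \frac{2K\delta}{\theta}.
\end{equation*}
For each of the finitely many $n\le N$, we have $\int_0^t\bar\phi^{\star n}(t-s)e^{-\theta s/2}ds=(\bar\phi^{\star n}\star e^{-\theta\cdot/2})(t)$. This is the convolution of an $L^1$ function with a function that is bounded and tends to $0$, so it tends to $0$ by dominated convergence: write it as $\int_0^t\bar\phi^{\star n}(u)e^{-\theta(t-u)/2}du$, whose integrand is dominated by $\bar\phi^{\star n}$ and converges pointwise to $0$. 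Hence $\limsup_t G(t)\le 2K\delta/\theta$, and letting $\delta\to0$ finishes the proof.

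The main obstacle is Step 2, the uniform boundedness of the renewal density $U$. If the key renewal theorem is to be avoided, I would instead bound $\int_0^t U(t-s)e^{-\theta s/2}ds$ via Laplace transforms. Its transform is $\frac{\widehat{\bar\phi}(p)}{(1-\widehat{\bar\phi}(p))(p+\theta/2)}$, which behaves like $\frac{2}{\theta\bar m p}$ as $p\to0$. This only yields Cesàro-type bounds, so the renewal theorem, using the exponential domination $\kappa<\theta$ for direct Riemann integrability, is the route I would commit to.
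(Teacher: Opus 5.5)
Your proof is correct, modulo one justification fixed below. It has the same skeleton as the paper's proof. After $s\mapsto t-s$, both arguments split the series at a level $N$, show that each of the finitely many terms $n\le N$ vanishes, and bound the tail by $\sup_{n>N}|\eps_n|$ times a quantity that stays bounded. That quantity is controlled by the tilted renewal density $U=\sum_{n\ge1}\bar\phi^{\star n}$, which is essentially the paper's $F(t)e^{-\theta t}$. (Use $|\eps_n|$ if the sequence may be signed.)

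The two ingredients are obtained differently:
\begin{itemize}
\item For $n\le N$, the paper uses the induction bound $\phi^{\star n}(t)\le C^n t^{n-1}e^{\kappa t}/(n-1)!$, which is where it needs $\kappa<\theta$. Your dominated-convergence argument only uses $\bar\phi^{\star n}\in L^1$, i.e.\ $\widehat\phi(\theta)=1$.
\item For the tail, the paper computes $\widehat U(p)\approx 1/(\bar m\,p)$ and appeals to Haar's Tauberian theorem. You use renewal theory instead.
\end{itemize}
Your route is more elementary and more robust. As you rightly note, an expansion as $p\to0^+$ alone gives only Ces\`aro-type information. Haar's theorem also needs control of the transform along the whole line $\mathrm{Re}\,p=0$, which the paper does not check.

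The point to fix is your direct Riemann integrability claim. For a merely measurable $\phi$, exponential domination does not make $\bar\phi$ directly Riemann integrable. It does control $\sum_k\sup_{[k,k+1]}\bar\phi$, but $\bar\phi$ may fail to be Riemann integrable on compacts. For example, take $\phi=3\cdot\mathbf{1}_K$ with $K\subset[0,1]$ a fat Cantor set of measure $1/2$. So the classical key renewal theorem does not apply as stated.

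You do not need it. Let $R=\sum_{n\ge0}\mathbb{P}(T_n\in\cdot)$ be the renewal measure, and use the standard facts $R([0,1])<\infty$ and $R([a,a+1])\le R([0,1])$. Then
$U(t)=\int_{[0,t]}\bar\phi(t-s)\,R(ds)\le C\,R([0,1])\sum_{k\ge0}e^{-(\theta-\kappa)k}$,
which gives $\sup U<\infty$ directly.

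Even more directly, your own expectation representation gives
$\sum_{n\ge1}\mathbb{E}\bigl[e^{-\theta(t-T_n)/2}\mathbf{1}_{\{T_n\le t\}}\bigr]\le R([0,1])\sum_{k\ge0}e^{-\theta k/2}$.
This bounds the tail with no regularity or growth assumption on $\phi$ beyond $\widehat\phi(\theta)=1$. Commit to this renewal-measure bound rather than to the key renewal theorem or the Laplace-transform route.
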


\begin{proof}
    By induction, for all~$n\in\mathbb{N}^*$,
    $$\phi^{\star n}(t) \leq C^n \frac{t^{n-1}}{(n-1)!} e^{\kappa t}.$$
Let $\eta_n = \sup_{k\geq n} |\eps_k|$, which vanishes and is non-increasing, and write, for any~$N$,
    \begin{align*}
        &\underset{t\to\infty}{\limsup}~e^{-\theta t}\sum_{n=1}^{+\infty}|\eps_n| \int_0^t \phi^{\star n}(t-s) e^{\theta s/2}ds\\
        &=\underset{t\to\infty}{\limsup}~e^{-\theta t/2}\sum_{n=1}^{+\infty}|\eps_n| \int_0^t \phi^{\star n}(s) e^{-\theta s/2}ds\\
        &\leq  \underset{t\to\infty}{\limsup}~e^{-\theta t/2}\sum_{n=1}^{N}|\eps_n| \int_0^t \phi^{\star n}(s) e^{-\theta s/2}ds\\
        &~~~~+\underset{t\to\infty}{\limsup}~e^{-\theta t/2}\sum_{n=N+1}^{+\infty}|\eps_n| \int_0^t \phi^{\star n}(s) e^{-\theta s/2}ds\\
        &= S_1+S_2.
    \end{align*}
Since for every~$n$,
    $$e^{-\theta t/2} \int_0^t \phi^{\star n}(s)e^{-\theta s/2}ds\leq C^n e^{-\theta t/2}\int_0^t s^n e^{\kappa s}e^{-\theta s/2}ds.$$
Let $\delta>0$ small enough such that $\kappa + \delta<\theta$, and consider $C_\delta$ such that, for $t\geq 0,$ $s^n\leq  C_\delta e^{\delta s}$. Whence
    $$e^{-\theta t/2} \int_0^t \phi^{\star n}(s)e^{-\theta s/2}ds \leq C^n C_\delta e^{-\theta t/2} \int_0^t e^{(\kappa + \delta - \theta/2)s}ds \leq K e^{(\kappa + \delta - \theta)t}\underset{t\to\infty}{\longrightarrow}0,$$
    so $S_1=0$. To control $S_2$, let us notice that
    $$S_2 \leq \eta_N e^{-\theta t/2}\int_0^t \sum_{n=0}^{+\infty}\phi^{\star n}(s)e^{-\theta s/2}ds.$$
Denoting $F(t) = \sum_{n\geq 0} \phi^{\star n}(t)$, it is sufficient to prove that
    $$\underset{t\geq 0}{\sup}~~e^{-\theta t/2}\int_0^t F(s) e^{-\theta s/2}ds<\infty.$$
Let $U(t) = F(t) e^{-\theta t}$, then, by classical properties of Laplace transform, for any~$p>0,$
    $$\hat U(p) = \hat F(p + \theta) = \sum_{n\geq 0} \hat \phi(p+\theta)^n = \frac{1}{1-\hat \phi(p+\theta)} = \frac{1}{\bar m\, p} + O(1),$$
    since
    $$\hat \phi(p+\theta)= \hat\phi'(\theta) p + O(p^2) = -\bar m\,p + O(p^2).$$
Hence, by Tauberian-type results (see Feller version of Haar's Tauberian theorem, stated in the proof of Theorem~4 of \cite{feller197716}), $U(t)$ converges as $t$ goes to infinity, and so it is bounded. Implying that, for all~$t\geq 0$,
    $$F(t) \leq C e^{\theta t}.$$
    Whence
    $$e^{-\theta t/2}\int_0^t F(s) e^{-\theta s/2}ds\leq C e^{-\theta t/2}\int_0^t e^{\theta s/2}ds \leq \frac{C}{\theta}<\infty.$$
    This proves that $S_2$ vanishes as $N$ goes to infinity.
\end{proof}

\noindent
\textbf{Acknowledgements}\\
I would like to express my deep gratitude to my thesis supervisor, Arnaud Le Ny, for introducing me to this topic and for  his continuous support and insightful discussions.


\bibliography{reference}



\end{document}